\newcounter{RomanNumber}
\newtheorem{thm}{Theorem}[section]
\newtheorem{corollary}[thm]{Corollary}
\newtheorem{lem}[thm]{Lemma}
\newtheorem{clm}[thm]{Claim}
\newtheorem{problem}[thm]{Problem}
\newtheorem{observation}[thm]{Observation}
\theoremstyle{remark}
\newcommand{\qed}{{$\quad\square$\vspace{2mm}}}
\begin{document}

\title{Packing a number of copies of a $(p,\,q)$-graph\thanks{The author's work is supported by NNSF of China (No.11671232, 12071260) and NSF of Shandong Province (No. ZR2017MA018).}}

\author{Yun Wang, Jin Yan\thanks{Corresponding author: Jin Yan, Email:  yanj@sdu.edu.cn} \unskip\\
School of Mathematics, Shandong University, Jinan 250100, China}

\date{}
\maketitle
\date{}
\maketitle

\begin{abstract}
Let $k,p,q$ be three positive integers. A graph $G$ with order $n$ is said to be $k$-placeable if there are $k$ edge disjoint copies of $G$ in the complete graph on $n$ vertices.  A $(p,\,q)$-graph is a graph of order $p$ with $q$ edges. Packing results have proved useful in the study of the complexity of graph properties. Bollob\'{a}s et al.  investigated the $k$-placeability of $(n,\,n-2)$-graphs  and $(n,\,n-1)$-graphs with $k=2$ and $k=3$. Motivated by their results, this paper characterizes $(n,\,n-1)$-graphs with girth at least $9$ which are $4$-placeable. We also consider the $k$-placeability of $(n,\,n+1)$-graphs and 2-factors.
\end{abstract}

{\bf Keywords:} Packing; $(p,\,q)$-Graph; 2-Factor; $k$-Placeable

\section{Introduction}  \label{section 1}

This paper considers only finite simple graphs and uses standard terminology and notation from \cite{Bondy1976Graph} except as indicated. For any  graph $G$, we denote by $V(G)$ (resp. $E(G)$) the vertex set (resp. the edge set). The \emph{maximum degree} (resp. \emph{minimum degree}) of $G$ is denoted by $\Delta(G)$ (resp.  $\delta(G)$). For two  graphs $H_1$ and $H_2$, we use $H_1\uplus H_2$ to represent the vertex disjoint union of $H_1$ and $H_2$.  A \emph{2-factor} is a graph whose components are all cycles. Let $K_n$ be the complete graph of order $n$.  A path, cycle and star with order $n$ is denoted by $P_n, C_n$ and $S_n$, respectively. The tree $S_{a}^b$, of order $a+b$, is obtained from star $S_{a}$ by inserting $b$ vertices into an edge of $S_{a}$. The girth of $G$, i.e. the length of a shortest cycle of $G$, is denoted by $g(G)$. A vertex of $G$ with degree $1$ is a \emph{leaf}.

We say that a $k$-tuple $\Phi=(\phi_1,\phi_2,\ldots, \phi_k)$ is a \emph{$k$-placement} of a graph $G$ on $n$ vertices if for each $i$, $\phi_i$ is a permutation: $ V(G)\rightarrow V(K_n)$ such that $E(\phi_i(G))\cap E(\phi_j(G)) = \emptyset$ for $i \neq j$. If $G$ has a $k$-placement, then $G$ is \emph{$k$-placeable}.  A graph with $p$ vertices and $q$ edges is called a $(p,\,q)$-graph.

Packing results have proved useful in the study of the complexity of graph properties \cite{bollobas1978}. It is worthwhile mentioning that the packing problem  is NP-complete.  Interestingly, the 2-placeability of  $(n,\,n-2)$-graphs was independently solved by several groups of researchers almost at the same time, see Bollob\'{a}s and Eldridge \cite{be1978}, Burns and Schuster \cite{bs1977} and Sauer and Spencer \cite{ss1978}.  As for 2-placeability of $(n,\,n-1)$-graphs, the first result was given by Hedetniemi, Hedetniemi and Slater in \cite{hhs1981}. They solved the problem when the $(n,\,n-1)$-graph is a tree. Later, Burns and Schuster \cite{bs1978} and Yap \cite{survey1988} generalized this result to all $(n,\,n-1)$-graphs and proved the following.

\begin{thm}\label{theorem 11} (\cite{bs1978})  Let $G$ be a graph of order $n$ with at most $n-1$ edges. Then either $G$ is 2-placeable or $G$ is isomorphic to one of the following graphs: $S_n$, $ S_{n-3}\uplus C_3 (n\geq 8)$, $K_1\uplus 2C_3$, $K_1\uplus C_4$, $K_1\uplus C_3$, $K_2\uplus C_3$.
\end{thm}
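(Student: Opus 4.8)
The first move is to recast $2$-placeability in the standard complement form: identifying the two copies on one vertex set, $G$ (of order $n$) is $2$-placeable if and only if there is a permutation $\sigma$ of $V(G)$ with $\sigma(u)\sigma(v)\notin E(G)$ for every $uv\in E(G)$, equivalently $G$ embeds as a spanning subgraph into its complement $\overline{G}$. Two facts will be used repeatedly. If $G=H_1\uplus H_2$ and each $H_i$ embeds in $\overline{H_i}$, then, since $\overline{G}$ contains $\overline{H_1}\uplus\overline{H_2}$ together with all cross edges, $G$ embeds in $\overline{G}$; so a disjoint union of $2$-placeable graphs is $2$-placeable. Also, $q\le n-1$ forces the cycle rank $q-n+c$ (with $c$ the number of components) to be at most $c-1$, so every component is a tree or unicyclic, and a connected such $G$ is exactly a tree.

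For the easy direction I would verify that each listed graph fails to embed in its complement. For the star this is immediate: any $\sigma$ moving the centre onto a leaf recreates the edge from that leaf to the old centre. The four small exceptions containing a triangle or a $4$-cycle all fall to a single observation: $\sigma$ maps each cycle to a cycle of the same length that must sit inside $\overline{G}$, and for these graphs $\overline{G}$ simply does not contain the required (disjoint) short cycles. Concretely, $\overline{K_1\uplus C_3}=S_4$ and $\overline{K_2\uplus C_3}=K_{2,3}$ are triangle-free; $\overline{K_1\uplus C_4}$ is a bowtie and has no $4$-cycle; and in $\overline{K_1\uplus 2C_3}$ every triangle passes through the single vertex coming from the isolated $K_1$, so two vertex-disjoint triangles cannot coexist. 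The graph $S_{n-3}\uplus C_3$ is the genuinely delicate case: here $\overline{G}$ does contain triangles, so no cheap cycle obstruction applies; instead one must see that the star cannot be completed, because the vertex playing the role of the old star-centre has degree only $3$ in $\overline{G}$ while the three triangle-vertices are mutually non-adjacent there. A short computation shows this clash blocks a spanning copy of $S_{n-3}\uplus C_3$ precisely when there are at least four leaves, i.e.\ when $n\ge 8$; for $n=7$ the old centre can serve as the star-centre and the graph is placeable, which explains the threshold.

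The hard direction is to show every other $(n,\le n-1)$-graph does embed in $\overline{G}$. I would first treat trees, reproving (or invoking) the Hedetniemi--Hedetniemi--Slater fact that a tree is placeable unless it is a star: induct on $n$ after removing an end-vertex $v_0$ of a longest path $v_0v_1\cdots v_d$ (here $d\ge3$ as $T\ne S_n$); if $T-v_0$ is not a star, extend its placement using that $v_1$ is internal, and if $T-v_0$ is a star then $T$ is a star with one edge subdivided, a member of the family $S_a^b$, which I would place directly. For a disconnected or unicyclic $G$ I would peel off an isolated vertex, a leaf, or a whole small component, place the remainder by induction, and glue via the union fact; an isolated $K_1$ in particular acts as a free slot that absorbs the image of a star-centre, which is why $S_k\uplus K_1$ is already placeable. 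This funnels the analysis into the configurations in which every nontrivial component is a star or a short cycle with only a handful of spare vertices, and a finite check there isolates exactly the six exceptions.

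The step I expect to fight hardest is the completeness of the list, and within it the interaction between triangles and a scarce supply of spare vertices. A triangle needs three mutually non-adjacent target vertices in $\overline{G}$, and when the only slack is one isolated vertex or one extra edge there is no room to reroute all three edges; this is exactly what produces $K_1\uplus C_3$, $K_2\uplus C_3$, $K_1\uplus C_4$ and $K_1\uplus 2C_3$ as small exceptions. The subtler $S_{n-3}\uplus C_3$ must be separated out by hand, together with the leaf-deletion bookkeeping in the tree induction, where one must ensure a deletion does not silently create a star or one of these bad graphs; keeping that accounting correct across all component types is the real obstacle.
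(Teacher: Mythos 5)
The paper does not prove Theorem \ref{theorem 11}: it is quoted from Burns and Schuster \cite{bs1978} (see also Yap \cite{survey1988}), so there is no in-paper argument to compare yours against. Judged on its own, your treatment of the easy direction is correct and essentially complete: the complement reformulation of $2$-placeability, the degree obstruction for $S_n$, the triangle and four-cycle obstructions for $K_1\uplus C_3$, $K_2\uplus C_3$, $K_1\uplus C_4$ and $K_1\uplus 2C_3$, and the analysis of $S_{n-3}\uplus C_3$ (including why $n=7$ escapes while $n\ge 8$ does not) are all sound.

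The hard direction, however, is only a plan, and it contains one concrete error besides a large deferred step. From $q\le n-1$ you conclude that every component is a tree or unicyclic; this is false. The inequality $q-n+c\le c-1$ bounds the \emph{sum} of the cyclomatic numbers of the components, not each one individually: with $c\ge 3$ a single component may have cyclomatic number $2$ or more (for instance $K_4$ minus an edge together with two isolated vertices is a $(6,5)$-graph). Such components are neither stars, nor short cycles, nor trees, so they escape the ``funnel'' on which your case analysis rests; they are not difficult to place, but your sketch does not place them. Moreover, the decisive content of the theorem --- that \emph{only} the six listed graphs fail --- is compressed into ``a finite check there isolates exactly the six exceptions.'' That check is where essentially all of the work lies: one must verify, through every removal of a leaf, an isolated vertex, or a component, that the residue is not a star or one of the bad graphs (and repair the induction when it is). As written, the proposal establishes the necessity of the exceptions but not their sufficiency.
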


On packing three graphs, Wo\'{z}niak and Wojda \cite{wowo1993} proved that nearly all $(n,\,n-2)$-graphs are 3-placeable. Motivated by this result, Wang and Sauer \cite{ws1993,ws1996} considered the 3-placeability of connected $(n,\,n-1)$-graphs (each of them is a tree) and disconnected $(n,\,n-1)$-graphs, respectively. They proved the following theorem.

\begin{thm}\label{theorem 12} (\cite{ws1996})  Let $G$ be an $(n,\,n-1)$-graph with $g(G)\geq 5$ and order $n\geq 6$. Then $G$ is 3-placeable if and only if $G$ is not  isomorphic to one of the following graphs: $S_n$, $C_5\uplus K_1$, $S_4^2$, $S_{n-1}^1$.
\end{thm}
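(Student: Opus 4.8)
The plan is to prove the two directions separately. For necessity, I would show directly that none of the four listed graphs admits a $3$-placement. The star $S_n$ is not even $2$-placeable by Theorem~\ref{theorem 11}, so it fails a fortiori. For $C_5\uplus K_1$ we have $n=6$, and a $3$-placement would decompose all $15$ edges of $K_6$ into three edge-disjoint copies; since each copy contributes degree $0$ or $2$ at every vertex, the three contributions at a fixed vertex sum to an even number, which cannot equal the odd value $5=\deg_{K_6}(v)$. The remaining graphs $S_4^2$ and $S_{n-1}^1$ are the delicate cases: a pure parity count is in fact consistent for the degree sequence $(3,2,2,1,1,1)$ of $S_4^2$, and the ``three centres of degree $n-2$'' count for $S_{n-1}^1$ does not close by itself, so here I would track the edges incident to the images of the high-degree and subdivision vertices and extract a contradiction from the coincidences these forcibly impose.

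For sufficiency I would first record the shape of $G$. The cyclomatic number of an $(n,n-1)$-graph equals $c-1$, where $c$ is the number of components; hence a connected such $G$ is a tree, and in general the components together carry exactly $c-1$ independent cycles, each of length at least $5$ because $g(G)\ge 5$. The connected case is precisely the tree result of \cite{ws1993}, which I would invoke, so the substance is the disconnected case. Since $2(n-1)<2n$, the graph $G$ always has a vertex of degree at most $1$, and I would run an induction on $n$ whose engine is leaf deletion. If $\ell$ is a leaf with neighbour $u$ and $G'=G-\ell$, then any three placements $\psi_1,\psi_2,\psi_3$ of $G'$ with $\psi_1(u),\psi_2(u),\psi_3(u)$ pairwise distinct extend to a $3$-placement of $G$ by fixing $\ell$ and setting $\phi_i|_{V(G')}=\psi_i$: the three new edges $\{\ell,\psi_i(u)\}$ are then pairwise disjoint and avoid the copies of $G'$, which lie on $V(G)\setminus\{\ell\}$.

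The heart of the argument is therefore twofold: producing placements of $G'$ whose $u$-images can be separated, and coping with isolated vertices and cyclic components, which change the edge count under deletion. Isolated vertices I would peel off and reinsert, exploiting the extra room they provide; long cycles I would place first (they are flexible enough to be $3$-placed on their own) and then attach the tree parts by the leaf reduction. When the maximum degree $\Delta(G)$ is bounded away from $n$ there is ample freedom to realise the separation---concretely, by labelling $V(G)$ with $\mathbb{Z}_n$ and using cyclic-shift permutations, whose copies are automatically edge-disjoint once the attachment differences of the deleted leaf are kept apart. When $\Delta(G)$ is close to $n$ the graph is star-like, and I would analyse these few families by hand, confirming that the only failures are $S_n$ and $S_{n-1}^1$.

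The main obstacle I anticipate is the separation step in the induction, together with its interaction with the exceptional graphs. After placing $G'$ one must be able to choose the three copies so that $u$ acquires three distinct images, and this can break down precisely when $G'$ is highly symmetric or is itself one of the excluded graphs; verifying that deleting a leaf from a non-exceptional $G$ never forces $G'$ into an unrecoverable exceptional configuration is where the real care lies. This feeds into a finite but genuinely nontrivial base-case check at small orders (around $n=6,7$), and it is exactly there that $C_5\uplus K_1$ and $S_4^2$ surface as the true low-order obstructions, neither of which is detected by a naive degree count.
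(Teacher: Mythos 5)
This statement is not proved in the paper at all: it is quoted as background from Wang and Sauer \cite{ws1996}, so there is no in-paper argument to measure your proposal against. Judged on its own terms, your proposal is a reasonable programme but not a proof, and it has two concrete gaps. First, in the necessity direction you dispose of $S_n$ and $C_5\uplus K_1$ correctly (the parity argument for $C_5\uplus K_1$ is fine, since $3\cdot 5=e(K_6)$ forces a decomposition and every vertex degree in a copy is $0$ or $2$), and $S_{n-1}^1$ actually follows from the standard degree bound you did not invoke: it is connected with $\Delta=n-2=n-k+1$ for $k=3$, so if the three images of the centre coincided a vertex of $K_n$ would need degree at least $2(n-2)>n-1$, and if they are distinct then at each image the host copy uses $n-2$ edges while each of the other two copies uses at least one (connectivity gives minimum degree $1$), totalling $n>n-1$. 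But for $S_4^2$ you only promise to ``track the edges \ldots{} and extract a contradiction''; since $\Delta(S_4^2)=3=n-3$ and the parity count is consistent, this case needs an explicit finite argument in $K_6$, and none is given.

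Second, and more seriously, the sufficiency induction has a missing engine. Your leaf-extension step requires a $3$-placement of $G'=G-\ell$ in which the neighbour $u$ of the deleted leaf receives three \emph{distinct} images, but the induction hypothesis (the theorem itself) only supplies \emph{some} $3$-placement of $G'$, with no control over which vertices are $3$-placed. You need either a strengthened inductive statement (e.g.\ that non-exceptional graphs admit a $3$-placement in which a prescribed vertex, or all vertices of small degree, are $3$-placed --- the analogue of what the present paper builds via Lemma \ref{lem 2}, Lemma \ref{lemma 2.1} and the notion of dispersed placements) or a separate structural lemma producing such placements; as written, the step ``any three placements of $G'$ with $\psi_1(u),\psi_2(u),\psi_3(u)$ pairwise distinct extend'' is conditional on an object you have not shown to exist. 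You correctly identify this separation step and the possibility that $G'$ degenerates into an exceptional graph as ``where the real care lies,'' but identifying the difficulty is not the same as resolving it: the entire content of the Wang--Sauer theorem sits in exactly that case analysis, which your proposal leaves open.
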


Since $e(K_n)=\frac{n(n-1)}{2}\geq k(n-1)$ holds only if $n\geq 2k$, an $(n,\,n-1)$-graph with $2\leq n\leq 2k-1$ is not $k$-placeable (the case $n=1$ is trivial). Also, no connected graph with $\Delta(G)\geq n-k+1$ is $k$-placeable. A natural and interesting problem arises.

\begin{problem}\label{problem 1} Let $G$ be an $(n,\, n-1)$-graph with $n\geq 2k$ and $\Delta(G)\leq n-k$. Is $G$ $k$-placeable? If $G$ is not $k$-placeable, can we characterize its structure?
\end{problem}

Actually, \.{Z}ak \cite{zak2011} considered $k$-placeability of sparse graphs. He proved that a graph $G$ of order $n\geq2(k-1)^3$ is $k$-placeable if $|E(G)|\leq n-2(k-1)^3$. In general, the problem of $k$-placeability is more difficult for dense graphs than for sparse graphs. We consider the case $k=4$ with large girth of Problem \ref{problem 1} in this paper. Note that Theorems \ref{theorem 11} and  \ref{theorem 12} imply that some graphs containing small cycles are not $2$-placeable and $3$-placeable. Moreover, the graph $C_7\uplus K_1$ is not 4-placeable because the degree of each vertex in $K_8$ is odd. For this reason and in order to make it clear for readers to understand the tricks in our paper, we investigate $(n,\, n-1)$-graphs with $g(G)\geq 9$, rather than $g(G)\geq 8$ or $g(G)\geq 7$. Let $W$ be the set of graphs depicted in Fig. \ref{fig1}. The following theorem is our main result.

\begin{thm}\label{theorem 2} Let $G$ be an $(n,\, n-1)$-graph with $g(G)\geq 9$ and order $n\geq 8$. Then $G$ is $4$-placeable if and only if $\Delta(G)\leq n-4$ and $G\notin W$.
\end{thm}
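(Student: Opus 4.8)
The plan is to separate the two implications, with necessity being routine and sufficiency carrying all the difficulty. For necessity, assume $G$ is $4$-placeable. The bound $\Delta(G)\le n-4$ is the case $k=4$ of the principle recalled in the introduction that no connected graph with $\Delta\ge n-k+1$ is $k$-placeable; the only supplementary point is a disconnected $G$ with $\Delta(G)=n-3$, which forces all but at most two vertices into the component of the maximum-degree vertex and is ruled out by a short count of the edges incident to the four images of that vertex in $K_n$. That $G\notin W$ is then checked member by member: each graph in $W$ satisfies $\Delta\le n-4$ yet fails to be $4$-placeable, the obstruction being either a vertex forced to receive four coincident incident edges or a parity/degree-saturation argument of the same flavour that eliminates $C_7\uplus K_1$.

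For sufficiency I would induct on $n$. The base cases are the finitely many graphs up to a small threshold, handled by exhibiting a placement directly; note that for $n=8$ a $4$-placement is exactly a decomposition of $K_8$ into four copies of $G$, which determines these cases completely. For the inductive step, take $\Delta(G)\le n-4$, $G\notin W$, and $n$ large. If $G$ is leafless it is a disjoint union of cycles of length at least $9$ together with isolated vertices (this is one place where $g(G)\ge 9$ is used), and such graphs I would place directly by a cyclic shift on $\mathbb{Z}_n$, the large girth guaranteeing that the four shifted copies are edge-disjoint. Otherwise $G$ has a leaf, and the reduction is to pick a suitable leaf $v$ with neighbour $u$ and pass to $G'=G-v$, an $(n-1,n-2)$-graph with $g(G')\ge 9$. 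Preserving the hypotheses is straightforward: deleting an edge cannot decrease the girth, and for $n$ large the value $n-4$ is attained by at most one vertex, which in turn must carry a leaf, so taking $v$ to be that leaf yields $\Delta(G')\le n-5=(n-1)-4$. The finitely many coincidences $G'\in W$ are sidestepped by selecting a different eligible leaf.

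The heart of the argument is the extension step, which I expect to be the main obstacle. Given a $4$-placement $\Phi'=(\phi_1',\dots,\phi_4')$ of $G'$ in $K_{n-1}$, embed $K_{n-1}\subset K_n$ and call the new host vertex $x$. Since each $\phi_i'$ is already a bijection onto $V(K_{n-1})$, the only extension to a permutation of $V(K_n)$ sets $\phi_i(v)=x$, so the four new edges are $x\,\phi_i'(u)$; these automatically avoid the old edges (which miss $x$), and therefore the extension succeeds if and only if $\phi_1'(u),\dots,\phi_4'(u)$ are four distinct vertices. The induction must accordingly be strengthened to the statement that every admissible $(n-1,n-2)$-graph has a $4$-placement in which a prescribed attachment vertex $u$ receives four distinct images, and carrying this spreading condition through the reduction is the crux. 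When the inherited placement collides two images $\phi_i'(u)=\phi_j'(u)$, I would repair it by a controlled transposition in the $j$-th copy that moves $u$ off the collision while, thanks to $g(G')\ge 9$ and the degree slack $\Delta(G')\le n-5$, disturbing only edges that stay disjoint from the other three copies. Precisely where no such repair survives—either because the slack drops to zero at the boundary $\Delta=n-4$ or because the local configuration is too rigid—is where the exceptional family $W$ is forced to arise, so the real work is the bookkeeping of these tight cases rather than any isolated inequality.
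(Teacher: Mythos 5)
Your reduction ``delete one leaf $v$, place $G'=G-v$ in $K_{n-1}$, and send the four images of $v$ to the new vertex $x$'' contains the central gap: it succeeds only if the neighbour $u$ is $4$-placed in the inherited placement, and the plain induction hypothesis gives no control over this. You acknowledge that the induction must be strengthened to produce a placement in which a \emph{prescribed} vertex receives four distinct images, and that the ``repair by controlled transposition'' is the crux --- but that repair is exactly the part you have not supplied, and it is where all the difficulty of the theorem lives. The paper sidesteps this entirely with Lemma \ref{lem 2}: it deletes \emph{four} leaves $U$ with distinct neighbours at once, and uses K\"{o}nig's edge-colouring theorem on the bipartite graph between $U$ and the (possibly coincident) images of $N_G(U)$ to route the four pendant edges into edge-disjoint matchings, with \emph{no} dispersal requirement on the neighbours. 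That single lemma replaces your strengthened induction. The price is that when $G$ has at most three nodes the lemma does not apply, and handling those graphs is the bulk of Section \ref{section 4} (the deletion scheme producing $G'$, the $(A,U,B)$-structures of Lemma \ref{lem 4}, and the good placements of lassos via Lemma \ref{lemma 2.11}); your proposal does not engage with this regime at all.

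Two further points are wrong as stated. First, a leafless $(n,\,n-1)$-graph with $g(G)\geq 9$ need not be a union of cycles and isolated vertices: with $m\geq 2$ isolated vertices the non-trivial part has more edges than vertices and so contains a double lasso (e.g.\ $2K_1\uplus D(l,s,t)\,\uplus$ cycles). These are precisely the hard configurations that force the long case analysis of Claim \ref{clm 3.112} (degree sequences $(3,2,2)$, $(3,2,3)$, $(3,3,3)$ against the number of isolated vertices); they cannot be waved away. Second, a ``cyclic shift on $\mathbb{Z}_n$'' does not produce edge-disjoint copies of a cycle --- shifting the standard $n$-cycle by one returns the same edge set. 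One needs a placement using distinct difference classes, which is what the zigzag construction \textup{(\ref{eq 1})} and Lemma \ref{lemma 2.11} provide. Finally, your treatment of connected $G$ would amount to reproving the Haler--Wang theorem (Theorem \ref{lem 1111}), which the paper instead cites; without it, the exceptional family $W$ never actually ``falls out'' of your argument.
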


\begin{figure}[H]
    \centering      
    \includegraphics[scale=0.4]{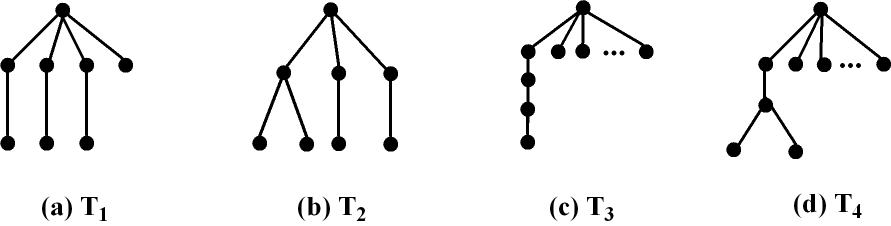}   
\caption{$W$}
\label{fig1} 
\end{figure}
Haler and Wang solved the case that $G$ is connected by proving the following.

\begin{thm}\label{lem 1111}(\cite{halerwang2014four-p}) A tree $T$  of order $n\geq 8$ is $4$-placeable if and only if $\Delta(T)\leq n-4$ and $T\notin W$.
\end{thm}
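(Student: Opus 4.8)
The plan is to prove the two directions separately, with necessity short and sufficiency carrying the weight. For necessity, suppose $T$ is $4$-placeable with placement $(\phi_1,\phi_2,\phi_3,\phi_4)$, and let $v$ be a vertex of maximum degree. To see $\Delta(T)\le n-4$, first observe that if two images $\phi_i(v),\phi_j(v)$ coincided at a vertex $w$, then two edge-disjoint copies would each contribute $\Delta(T)$ edges at $w$, forcing $2\Delta(T)\le n-1$, which is impossible once $\Delta(T)\ge n-3$ and $n\ge 8$; hence the four images of $v$ are distinct. Fixing $w=\phi_1(v)$ and counting the edges incident to $w$ in all four copies — pairwise distinct because the copies are edge-disjoint — gives $\Delta(T)+\sum_{i=2}^{4}\deg_T(\phi_i^{-1}(w))\le n-1$; since every vertex of a tree on at least two vertices has degree at least $1$, the sum is at least $3$, so $\Delta(T)\le n-4$. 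That $T\notin W$ is necessary is a finite verification over the graphs of $W$.

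For sufficiency I would assume $\Delta(T)\le n-4$ and $T\notin W$ and argue by induction on $n$, splitting on how close $T$ is to a star. If $\Delta(T)=n-4$, the vertex $v$ of degree $n-4$ is nonadjacent to exactly three other vertices, so $T$ is the star $S_{n-4}$ centred at $v$ with three extra vertices hung on its branches; up to isomorphism there are only boundedly many such shapes (short brooms and small spiders). For each shape I would give an explicit $4$-placement parameterized by $n$, placing the dominant star part by a rotation-type assignment on the neighbourhood of $v$ and routing the few remaining edges by hand. It is exactly here that the members of $W$ arise, as the shapes for which every such assignment fails, and they are removed by hypothesis.

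If instead $\Delta(T)\le n-5$, I would reduce. Choose a leaf $x$ with neighbour $u$ so that $T'=T-x$ is not in $W$ (possible because $T$ has at least two leaves and $W$ is small, the few forced collisions being handled directly); note that $T'$ is a tree on $n-1$ vertices and, since $\Delta(T')\le\Delta(T)\le n-5=(n-1)-4$, the degree hypothesis is preserved, so the induction hypothesis yields a $4$-placement $(\phi_1',\dots,\phi_4')$ of $T'$ in $K_{n-1}$. To extend it, I would adjoin the $n$-th vertex of $K_n$ and place the image of $x$ in each copy: when the images $\phi_1'(u),\dots,\phi_4'(u)$ are distinct, one may send $x$ to the new vertex in all four copies, since the four resulting edges at that vertex are then distinct and automatically disjoint from every edge of $T'$; when two of these images collide, I would reroute by sending $x$ in one colliding copy to an unused direction and compensating with a local swap, for which the slack $\Delta(T)\le n-5$ and $n\ge 8$ leave ample room.

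The main obstacle will be precisely this extension together with its bookkeeping. Making the reroute succeed in every collision pattern — there are six pairs of copies and several ways two copies can overlap at $u$ or at the new vertex — calls for a careful and somewhat lengthy case analysis, while one must simultaneously keep the deleted leaf from dropping $T'$ into $W$. I expect the cleanest route is to strengthen the induction hypothesis so that it produces a $4$-placement in which the four images of a prescribed vertex are distinct, making the generic extension always available; verifying that this stronger statement survives the reduction, settling the explicit near-star constructions, and checking the small base cases around $n=8$ is where the bulk of the effort lies.
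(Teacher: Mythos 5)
This theorem is not proved in the paper at all: it is quoted verbatim from Haler and Wang \cite{halerwang2014four-p} and used as a black box (it disposes of the connected case of Theorem \ref{theorem 2}). So there is no internal proof to compare against; I can only assess your argument on its own terms. Your necessity direction is fine (the edge count at $\phi_1(v)$, using that every vertex of a tree on at least two vertices has degree at least one, gives $\Delta(T)+3\le n-1$ directly, without the preliminary distinctness observation), and the overall architecture --- induction by leaf deletion, with near-stars and the exceptional family $W$ handled explicitly --- is the right shape.

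The genuine gap is the extension step in the case $\Delta(T)\le n-5$, which you correctly identify as the crux but do not resolve, and your proposed repair does not work as stated. If $\phi_1'(u)=\phi_2'(u)$, you suggest ``sending $x$ in one colliding copy to an unused direction''; but each $\phi_i'$ is a bijection of $V(T')$ onto $V(K_{n-1})$, so under $\phi_2'$ there are no unused vertices --- the only free spot is the single new vertex of $K_n$, which is exactly the one you cannot reuse. Any reroute therefore forces you to displace some $\phi_2'(y)$ onto the new vertex and put $x$ in its place, which changes every copy-2 edge at $y$ and can cascade into new collisions; ``ample room'' is not an argument here. The standard way around this (and the mechanism this paper itself sets up in Lemma \ref{lem 2}) is to delete \emph{four} leaves with pairwise distinct neighbours at once and use K\"{o}nig's edge-colouring theorem to build four edge-disjoint matchings from the four new vertices to the image sets $\phi_i(N(U))$, which succeeds regardless of how those image sets overlap. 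That, however, only applies to trees with at least four nodes; paths, spiders, and double brooms have too few, and it is precisely these low-node trees (together with the near-stars of your $\Delta=n-4$ case and the base case $n=8$) that carry the real work and produce the family $W$. Your proposal defers all of this --- the collision analysis, the explicit near-star placements, the preservation of a strengthened ``prescribed vertex is $4$-placed'' hypothesis, and the $n=8$ verification --- so as written it is an outline rather than a proof.
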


Theorems mentioned above focus on $k$ copies of a given graph. In fact, there are also some results concerning the packing of different graphs, see \cite{kmsw2001,wo2004} and other kinds of packing problems, see, e.g. \cite{list2015} (a list version of packing) and \cite{degree2016}. Several  conjectures on packing problems have appeared in the literature. Let us mention two of the best-known and interesting, which are still open.

One is the Tree Packing Conjecture (TPC) posed by Gy\'{a}rf\'{a}s \cite{Gyarfas1978}: Any set of $n-1$ trees $T_2,T_3,\ldots,T_n$ such that every $T_i$ of order $i$ has a packing into $K_n$.  Many partial results of TPC discovered by a number of researchers, see, e.g., the papers  \cite{balogh2012} by Balogh and Palmer, \cite{bollobas1983} by Bollob\'{a}s, \cite{zak2017} by \.{Z}ak  and \cite{zaks1977} by Zaks and Liu.  It is worthwhile mentioning that it is not known when the biggest four can be non-stars, so the state-of-the art knowledge is three trees \cite{hbk1987} while by \cite{halerwang2014four-p} one can pack four copies of a tree. In particular, \cite{joos2019} is a recent breakthrough in this topic, which showed that the TPC holds for all bounded degree trees. In \cite{Gyarfas2014}, Gy\'{a}rf\'{a}s showed that if such $n-1$ trees has a packing into $K_n$, then they can be also packed into any $n$-chromatic graph. Instead of packing trees into the complete graph, Gerbner et al. \cite{Gerbner2012} and Hobbs et al. \cite{hbk1987} conjectured that the trees $T_2,T_3,\ldots,T_n$ have a packing into $k$-chromatic graphs and the complete bipartite graph $K_{n-1,\lceil\frac{n}{2}\rceil}$, respectively.  For more on the latter conjecture, see \cite{Caro1990,hobbs1981,Yuster1997,zaks1977}.

Another interesting packing problem is the Bollob\'{a}s-Eldridge-Catlin (BEC) conjecture \cite{be1978,Catlin1976}, which considers packing two graphs  $G_1$ and $G_2$ with $(\Delta(G_1)+1)(\Delta(G_2)+1)\leq n+1$ into $K_n$. The conjecture is correct for $\Delta(G_1) = 2$ and $\Delta(G_1) = 3$ and $n$ sufficiently large \cite{ab1993,Csaba2003}, some classes of graphs \cite{ddege2008,Csaba2007}, and graphs with restrictions on degree or girth \cite{girth2017,Cames2018}.

The rest of the paper is organized as follows: The aim of Section \ref{section 2} is to prepare some notation and terminology used in the paper. In Section \ref{section 3},  we show some lemmas which are useful in the proof of Theorem \ref{theorem 2}. In Section \ref{section 4}, the proof of Theorem \ref{theorem 2} is given.
\section{Notation} \label{section 2}

We use the following notation throughout the paper.  The \emph{order} (resp. \emph{size}) of a graph $G$ is defined by $v(G)$ (resp. $e(G)$). That is, $v(G)=|V(G)|$, $e(G)=|E(G)|$. A \emph{degree} of a vertex $v\in V(G)$ is denoted by $d_G(v)$.  A vertex of degree at least two  adjacent to  a leaf is called a \emph{node}. For a subset $U$ of $V(G)$, the subgraph in $G$ induced by $U$ is denoted by $G[U]$, and let $G -U = G[V(G)\backslash U]$. \emph{A vertex $v$ of $G$ is $k$-placed} (resp. \emph{$k$-fixed}) by $\Phi$ if for each $i\neq j\in\{1,2,\ldots, k\}$, $\phi_i(v)\neq \phi_j(v)$ (resp. $\phi_i(v)=\phi_j(v)$). Moreover, if every vertex of $G$ is $k$-placed, then $\Phi$ is \emph{dispersed}. \emph{An edge $e$ is $k$-placed} by $\Phi$ if the set of edges $\{\phi_i(e): i=1,2,\ldots,k\}$ are vertex disjoint.

\textbullet \,\, A \emph{double} \emph{lasso} $D(l,s,t)$  consists of a path $v_1v_2\cdots v_l$ with additional edges $v_1v_s$ and $v_lv_{l-t+1}$, where $3\leq s\leq l, 3\leq t<l$.

\textbullet \,\, A \emph{lasso} $L(l,s)$ is obtained by deleting the edge $v_lv_{l-t+1}$ from $D(l,s,t)$. Clearly, $L(l,l)\cong C_l$.

\textbullet \,\, The graph obtained by replacing each leaf of $S_{t+1}$ with a path $P_{n_i}$ is \emph{$Q(n_1,\ldots, n_t)$}. It will be assumed that $1\leq n_1\leq n_2\leq \cdots\leq n_t$. Write $v$ as the center of $S_{t+1}$ and write $P_{n_i}=v_1^iv_2^i\cdots v_{n_i}^i$, where $v$ and $v_1^i$ are adjacent for each $i$.

Observe  that each connected $(n,n-1)$-graph is a tree, each connected $(n,n)$-graph contains a cycle and each connected  $(n,n+1)$-graph contains a double lasso.
\section{Preliminary results} \label{section 3}

\begin{observation}\label{observation 1}
Let $G$ be a graph and let $U$ be a set of some leaves. If $G-U$ has a $k$-placement  such that each vertex in $N_G(U)$ is $k$-placed, then $G$ is $k$-placeable.\end{observation}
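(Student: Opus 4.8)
The plan is to take the hypothesised $k$-placement $\Phi=(\phi_1,\dots,\phi_k)$ of $G-U$ and extend all $k$ permutations simultaneously by reinserting the deleted leaves. Write $n=v(G)$, so $\Phi$ lives in $K_{n-|U|}$ and every vertex of $N_G(U)$ is $k$-placed by it. I would regard $V(K_n)$ as $V(K_{n-|U|})\cup W$, where $W$ is a fresh set of exactly $|U|$ vertices. I would first record that the hypothesis is only meaningful when $N_G(U)\subseteq V(G-U)$, since ``$k$-placed by $\Phi$'' refers to images of vertices of $G-U$; consequently no two leaves of $U$ are adjacent, each $u\in U$ has a single neighbour $w_u$, and $w_u\in N_G(U)$ is $k$-placed.

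Next I would fix any bijection $c\colon U\to W$ and define, for each $i$, a map $\psi_i\colon V(G)\to V(K_n)$ by setting $\psi_i(v)=\phi_i(v)$ for $v\in V(G-U)$ and $\psi_i(u)=c(u)$ for $u\in U$; in other words every leaf is deliberately $k$-fixed while the rest of $\Phi$ is left untouched. Because each $\phi_i$ is a bijection onto $V(K_{n-|U|})$ and $c$ is a bijection onto $W$, each $\psi_i$ is a permutation of $V(K_n)$, so $(\psi_1,\dots,\psi_k)$ is a candidate $k$-placement and it remains only to verify edge-disjointness.

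The verification rests on the observation that $E(G)$ splits into $E(G-U)$ and the leaf edges $\{u w_u : u\in U\}$, and that under any $\psi_i$ a leaf edge becomes $\{c(u),\phi_i(w_u)\}$, with one endpoint in $W$ and one in $V(K_{n-|U|})$. Hence a leaf-edge image can never coincide with the image of an edge of $G-U$ (which lies wholly inside $V(K_{n-|U|})$), so those potential clashes are already excluded by $\Phi$ being a $k$-placement. For $i\neq j$, two leaf-edge images $\{c(u),\phi_i(w_u)\}$ and $\{c(u'),\phi_j(w_{u'})\}$ can agree only if their $W$-endpoints agree, forcing $c(u)=c(u')$ and therefore $u=u'$; but then the remaining endpoints $\phi_i(w_u)$ and $\phi_j(w_u)$ differ precisely because $w_u\in N_G(U)$ is $k$-placed. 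This closes the argument and shows $G$ is $k$-placeable.

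The step I expect to be the crux is exactly this last dichotomy, since it is where both hypotheses are consumed: injectivity of $c$ on $W$ separates leaf edges coming from different leaves, while the $k$-placed condition on $N_G(U)$ separates the two copies of a single leaf edge. There are essentially no calculations; the only genuine care needed is the bookkeeping that $W$ has cardinality $|U|$ and the observation that the implicit assumption $N_G(U)\subseteq V(G-U)$ follows from the hypothesis, which guarantees that no leaf edge is ever mapped inside $W$.
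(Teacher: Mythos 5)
Your proposal is correct and is exactly the argument the paper leaves implicit (the statement is given as an Observation with no written proof): reinsert the leaves as $k$-fixed vertices on fresh points, and use the $k$-placed neighbours to separate the $k$ copies of each leaf edge. The case analysis for edge-disjointness is complete, so nothing is missing.
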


Let $A$ and $B$ be two vertex disjoint induced subgraphs of $G$ and let $U\subseteq V(G)$ be an independent set such that either $V(A),V(B),U$ is a partition of $V(G)$ or $V(A),V(B)$ is a partition of $V(G)$ with $U\subseteq V(A)$ or  $U\subseteq V(B)$. We allow $U$ to be an empty set. A graph $G$ is an  \emph{($A,U,B$)-structure} graph if

(i) at most one vertex $a\in V(A)-U$ has neighbors in $B-U$, and  for each $u\in U$, $|N_A(u)|\leq 1$ if $U\cap V(A)=\emptyset$ and $|N_B(u)|\leq 1$ if $U\cap V(B)=\emptyset$,

(ii) each of $A$ and $B$ has a $k$-placement such that the vertices in $N_G(U)$ and $a$  are $k$-placed, and each vertex in $U$ is $k$-fixed.

\begin{figure}[H]
    \centering      
    \includegraphics[scale=0.3]{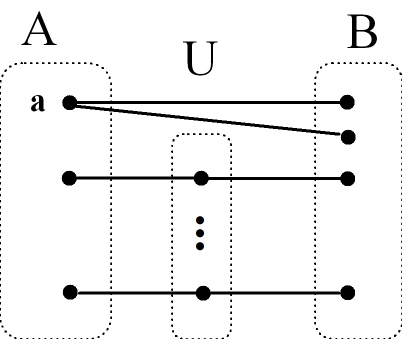}   
\caption{A partition of $G$}
\label{fig2} 
\end{figure}

\begin{lem}\label{lem 4}
If $G$ is an  ($A,U,B$)-structure graph, then $G$ is $k$-placeable.
\end{lem}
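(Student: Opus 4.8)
The plan is to build a $k$-placement $\Phi=(\phi_1,\dots,\phi_k)$ of $G$ by gluing together the given placements of $A$ and $B$ on disjoint blocks of $V(K_n)$, where $n=v(G)$. Since the vertices of $K_n$ are interchangeable, I may fix a partition $V(K_n)=X\cup Y\cup Z$ with $|X|=v(A)$, $|Y|=v(B)$, $|Z|=|U|$ (in the scenarios $U\subseteq V(A)$ or $U\subseteq V(B)$ one simply folds $Z$ into $X$ or $Y$), identify the host of the given $k$-placement $\Phi^A$ of $A$ with $X$ and that of $\Phi^B$ with $Y$, and set $\phi_i(w)=\phi_i^A(w)$ for $w\in V(A)$ and $\phi_i(w)=\phi_i^B(w)$ for $w\in V(B)$. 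In the partition scenario I place each $u\in U$ on its own vertex $z_u\in Z$ and keep it $k$-fixed, i.e.\ $\phi_i(u)=z_u$ for all $i$, with $u\mapsto z_u$ a bijection onto $Z$. As $\phi^A_i$, $\phi^B_i$ and $u\mapsto z_u$ are bijections onto $X$, $Y$, $Z$, each $\phi_i$ is a permutation of $V(K_n)$, so $\Phi$ is a legitimate candidate placement.

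It then remains to check $E(\phi_i(G))\cap E(\phi_j(G))=\emptyset$ for $i\neq j$. First I would sort the edges of $G$ into five families: those inside $A$, those inside $B$, the edges joining $U$ to its (at most one) neighbour in $A$, the edges joining $U$ to its (at most one) neighbour in $B$, and the edges joining the distinguished vertex $a$ to $B-U$. Under the partition these families lie in pairwise distinct blocks (edges inside $X$, inside $Y$, between $Z$ and $X$, between $Z$ and $Y$, and between $X$ and $Y$, respectively), so an edge of one family can never coincide with an edge of another. This removes all cross-family collisions at once and reduces the task to ruling out collisions within a single family between two distinct copies.

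The heart of the argument is this within-family check, and it rests on one mechanism supplied by hypothesis (ii): if an edge has one endpoint $k$-fixed and the other $k$-placed, then its $k$ images all share the fixed endpoint but have pairwise distinct mobile endpoints, so they are pairwise distinct edges. The families inside $A$ and inside $B$ are already edge-disjoint across copies because $\Phi^A$ and $\Phi^B$ are $k$-placements. For an edge $uv$ joining $U$ to $A$ I would use that $u$ is $k$-fixed while $v\in N_G(U)$ is $k$-placed by $\Phi^A$, and symmetrically for edges joining $U$ to $B$; for the edges leaving $a$ I would use that $a$ is $k$-placed by $\Phi^A$, so their endpoints $\phi_i^A(a)$ in $X$ are distinct across copies. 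A brief computation, using that each $\phi_i^A$ is injective (hence $\phi_i^A(a)\neq\phi_i^A(u)$ for $u\in U$) and that distinct vertices of $U$ occupy distinct vertices of $Z$, disposes of the few remaining same-block coincidences.

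The two remaining configurations $U\subseteq V(A)$ and $U\subseteq V(B)$ run on the same scheme with $Z$ absorbed into $X$ (resp.\ $Y$): now $U$ is $k$-fixed already within $\Phi^A$ (resp.\ $\Phi^B$), its possibly many neighbours inside $A$ are controlled by $\Phi^A$ being a placement, and only its at-most-one neighbour in $B$ needs the fixed-and-placed mechanism, with $U\subseteq V(B)$ the mirror image of $U\subseteq V(A)$. I expect the main difficulty to be organizational rather than conceptual: keeping the five edge families and the three scenarios straight, and in particular ensuring that the distinguished vertex $a$, which may send several edges to $B$, collides with none of the edges joining $U$ to $B$. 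This last point is precisely where the $k$-placedness of $a$ together with the independence of $U$ is used.
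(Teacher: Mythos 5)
Your proposal is correct and follows essentially the same route as the paper's proof: partition $V(K_n)$ into blocks for $A$, $U$, $B$ (absorbing $U$'s block when $U\subseteq V(A)$ or $U\subseteq V(B)$), transplant the given placements, and use the $k$-fixedness of $U$ together with the $k$-placedness of $N_G(U)$ and $a$ to see that the cross-edges cannot collide. The only difference is that you spell out the within-family verification that the paper dismisses as "not difficult to check."
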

\begin{proof}
Suppose $v(G)=n$. First, we consider the case $U\cap V(A\uplus B)=\emptyset$ (as shown in Fig. \ref{fig2}). Let $\Phi(A),\Phi(U)$ and $\Phi(B)$ be the $k$-placements of $A,U$ and $B$, respectively. Now we construct a $4$-placement of $G$. First divide $K_n$ into three vertex disjoint subgraphs $K_{v(A)}$, $K_{v(U)}$ and  $K_{v(B)}$, and  put $\Phi(A)$, $\Phi(U)$ and $\Phi(B)$ into these three subgraphs, respectively. Recall that $G$ is an  ($A,U,B$)-structure graph, that is, $A,B$ and $U$ satisfy the condition (i) of the definition above. Since the vertices of $N_G(U)$ and $a$ are $k$-placed and each vertex in $U$ is $k$-fixed, it is not difficult to check that the $k$ copies of edges between $A, B$ and $U$ are edge disjoint. That is, one can obtain a $k$-placement $\Phi(G)$ of $G$, where  $\Phi(G)=\Phi(A)\cup \Phi(U) \cup \Phi(B)$.

In the cases $U\subseteq V(A)$ and $U\subseteq V(B)$, divide $K_n$ into two vertex disjoint subgraphs $K_{v(A)}$ and  $K_{v(B)}$. Put the $k$-placements of $A$ and $B$ into these two subgraphs, a $k$-placement of $G$ can be obtained similarly.\qed\end{proof}

The following interesting lemma is a key lemma, which improves Lemma 7 in \cite{zak2011} of \.{Z}ak.
\begin{lem}\label{lem 2}
Let $U\subseteq V(G)$ be a set of $k$ leaves such that the vertices in $U$  have distinct neighbors and let $V=N_G(U)$. Suppose that $G-U$ is $k$-placeable in $K_{n-k}$, where $n=|V(G)|$. Let $\phi_i(V)=V_i$ for each $1\leq i\leq k$. Then in $K_n$ there exist $k$ edge disjoint  matchings $M_1,M_2,\ldots,M_k$ that match $V_1,V_2,\ldots,V_k$  to $U$, respectively. That is, $G$ is also $k$-placeable.\end{lem}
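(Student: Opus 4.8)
The plan is to view the sought placement of $G$ as an extension of the given $k$-placement of $G-U$, so that building it reduces to an edge-colouring problem on an auxiliary bipartite graph. Since the $k$ leaves of $U$ have distinct neighbours, $V=\{w_1,\dots,w_k\}$ has exactly $k$ elements, where $w_j$ is the neighbour of the leaf $u_j$. Place $G-U$ in a fixed copy of $K_{n-k}$ inside $K_n$ according to $\Phi=(\phi_1,\dots,\phi_k)$, and let $U$ also denote the $k$ vertices of $K_n$ outside this copy; these will host the images of the leaves. The only edges of $G$ not yet placed are the $k$ edges $u_jw_j$, and under $\phi_i$ such an edge must join a vertex of $V_i$ to one of the $k$ free vertices. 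As every free vertex lies outside $K_{n-k}$ while every edge of $\phi_i(G-U)$ lies inside it, no new edge can clash with an old one; hence it suffices to produce $k$ pairwise edge-disjoint perfect matchings $M_1,\dots,M_k$ in $K_n$, where $M_i$ joins $V_i$ to $U$. Choosing $\phi_i(u_j)$ to be the free vertex matched to $\phi_i(w_j)$ by $M_i$ then yields the desired $k$-placement of $G$.

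To produce the matchings I would set up the bipartite graph $B$ whose two colour classes are $\{1,\dots,k\}$ (the indices of the placements) and $W:=V_1\cup\cdots\cup V_k$, joining $i$ to $v$ precisely when $v\in V_i$. Because each $\phi_i$ is injective and $|V|=k$, every index-vertex $i$ has degree $|V_i|=k$, while each $v\in W$ has degree equal to the number of the sets $V_i$ containing it, which is at most $k$; thus $\Delta(B)=k$. By K\"{o}nig's edge-colouring theorem, $B$ admits a proper edge-colouring with the $k$ colours $U$, identifying the colour set with the $k$ free vertices. Reading off this colouring, I define $M_i$ to consist of the edges $uv$ for which the edge $iv$ of $B$ receives the colour $u$.

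It then remains to check that this family has the required properties, and here the two halves of properness do exactly the work. Properness at the index-vertex $i$ says that the $k$ edges incident to $i$ receive $k$ distinct colours, so $M_i$ matches $V_i$ bijectively onto $U$ and is a perfect matching. Properness at a vertex $v\in W$ says that whenever $v\in V_i\cap V_{i'}$ with $i\neq i'$ the edges $iv$ and $i'v$ get different colours, so $v$ is matched to different free vertices in $M_i$ and $M_{i'}$; hence no edge $uv$ can lie in two of the matchings, which is precisely pairwise edge-disjointness. Combining this with the reduction of the first paragraph gives the claimed $k$-placement of $G$.

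The single genuine idea is the reduction to a bipartite edge-colouring of maximum degree $k$, after which K\"{o}nig's theorem settles everything at once; I expect the main obstacle to be verifying cleanly that the new leaf-edges can coincide neither with the already-placed edges of $G-U$ nor with one another except as controlled by the colouring, together with handling the degenerate possibility that some neighbour $w_j$ itself lies in $U$ (an isolated-edge component), which should be excluded or treated separately so that $|V|=k$ genuinely holds.
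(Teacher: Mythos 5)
Your proposal is correct and follows essentially the same route as the paper: both construct the auxiliary bipartite graph joining the index (or $u_i$) side to $W=V_1\cup\cdots\cup V_k$ according to membership in $V_i$, apply K\"onig's edge-colouring theorem with $k$ colours identified with the hosts of the leaves, and read off the matchings $M_i$ from the colour classes, with the same two properness checks giving perfection and pairwise edge-disjointness. Your closing caveat about a neighbour lying in $U$ is a reasonable observation about the statement's implicit hypotheses, but it does not change the argument.
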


\begin{proof} Let $U=\{u_1,u_2,\ldots, u_k\}$ and $W=V_1\cup V_2\cup\cdots \cup V_k$. Note that $0\leq |V_i\cap V_j|\leq k$ for each $1\leq i,j\leq k$ as each $V_i$ is a set of $k$ vertices.  Let $B(U,W)$ be a bipartite graph with partition classes $U$ and $W$ such that every vertex in $V_i$ is adjacent to $u_i$ for each $1\leq i\leq k$. It is well-known that the edge chromatic number of a bipartite graph $B$  equals the maximum degree of $B$ (K\"{o}nig's theorem). So the edges of the bipartite graph $B(U,W)$ can be colored with exactly $k$ colors such that adjacent edges are colored distinct colors. Assume these $k$ colors are $c_1,c_2,\ldots, c_k$.

For each $i$, we construct a perfect matching $M_i$ between $V_i$ and $U$ as follows: for each $w \in V_i$, if there is an edge incident with $w$ colored with $c_j$ in $B(U,W)$, then add $u_jw$ to $M_i$ in $K_n$. Since the $k$ edges of $B(U,W)$ incident to $u_i$ have distinct colours, $M_i$ is indeed a perfect matching between $V_i$ and $U$. Moreover, the $k$ matchings $M_1,\ldots, M_k$ are edge-disjoint. If not, assume $u_iw\in M_j\cap M_l$. Then clearly $w\in V_j \cap V_l$ and then $u_jw,u_lw\in E(B(U,W))$. Moreover, the edges $u_jw$ and $u_lw$ are colored by the same color $c_i$ in $B(U,W)$, a contradiction.\qed\end{proof}

Let $t,k$ be two positive integers with $t\geq 2k$. Now we construct a dispersed $k$-placement of the path $P_t=v_1,\ldots,v_t$ in $K_t$. Let $V(K_t)=\{u_1,\ldots,u_t\}$. Define
\begin{equation}\label{eq 1}
\phi_i(P_t)=u_iu_{t-1+i}u_{i+1}u_{t-2+i}\cdots u_{\lfloor \frac{t}{2}\rfloor+i} \mbox{ for } i=1,2,\ldots,k,
\end{equation}
where the subscripts of the $u_j$'s are taken modulo $t$ in $\{1,2,\ldots,t\}$ (see Fig. \ref{fig9} (a), $\phi_i$ with $2\leq i\leq k$ can be obtained by rotating  $\phi_{i-1}$  one `unit' in the direction of the arrow. One can check that $\phi_1,\phi_2,\ldots,\phi_{k-1}$ and $\phi_k$ are pairwise edge-disjoint because $t\geq 2k$).

In order to show the property of the $k$-placement of the path $P_t$ in $K_t$, we use a table to exhibit $\Phi(P_t)$ (see Fig. \ref{fig9} (b)), where the vertex $u_i$ is replaced by $i$.

\begin{figure}[H]
\centering    
    \includegraphics[scale=0.3]{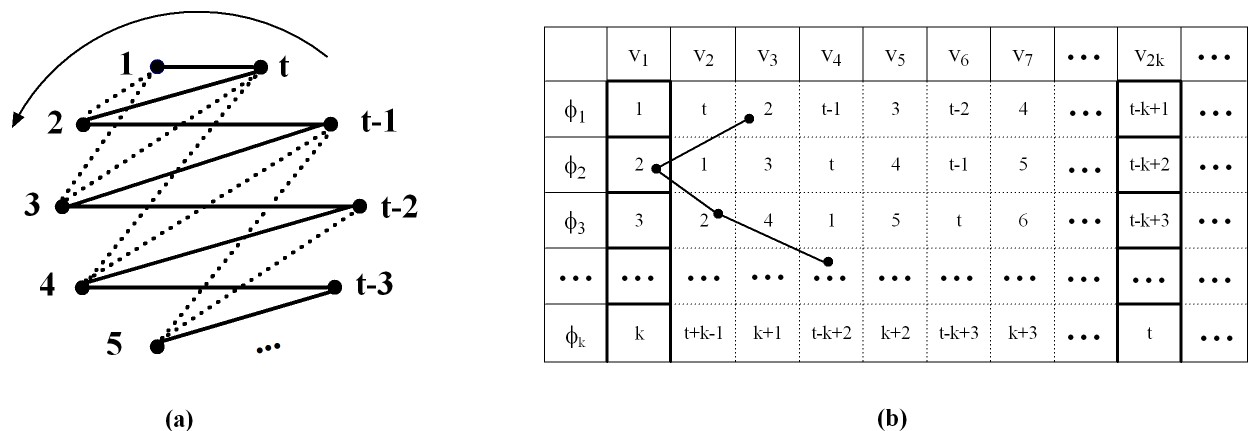}   
\caption{$\Phi(P_t)$} 
\label{fig9}  
\end{figure}

In the table,  since $t\geq 2k$,  the $k$-placements of $v_1$ and $v_{2k}$, i.e., the elements $1,2,\ldots,k, t-k+1,t-k+2,\ldots,t$ are pairwise distinct.  More generally, the following important property holds. \begin{equation}\label{eq 2}
\begin{split}
\text{ The } k\text{-placements of } v_a \text{ and } v_{b} \text{ with } |a-b|\geq 2k-1 \text{ are pairwise distinct.}
\end{split}
\end{equation}

To see this,  observe that each vertex $u_i$ arises exactly $k$ times in the table. Furthermore, suppose that columns $s$ and $t$ are the first and last column in which $u_i$ appears, respectively,  then $|s-t|\leq2k-2$ (for example, see $u_2$s in the table).  Thus if two vertices $v_a$ and $v_b$ has `large' distance ($|a-b|\geq2k-1$) on the path $P_t$, then the $k$-placements of $v_a$ and $v_b$ are $2k$ distinct vertices, that is, $\phi_i(v_a)\neq \phi_j(v_b)$ for each $1\leq i,j\leq k$.

By the construction of the dispersed $k$-placement of a path and (\ref{eq 2}), the following lemma follows immediately.
\begin{lem}\label{lemma 2.1} The path $P=v_1\cdots v_l$ with $l\geq 2k$  has a dispersed $k$-placement. Moreover, the $2k$ elements $\phi_i(a),\phi_j(b)$ with $|a-b|\geq 2k-1$ and  $1\leq i,j\leq k$ are pairwise distinct.
\end{lem}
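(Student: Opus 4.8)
The statement simply asserts that the explicit ``zigzag\,--then\,--rotate'' assignment in (\ref{eq 1}) is a dispersed $k$-placement with the stated separation property, so the plan is to take the construction as given and verify, in order: (a) each $\phi_i$ is a genuine placement of $P$ as a Hamiltonian path of $K_l$; (b) the $k$ placements are pairwise edge-disjoint; (c) the placement is dispersed; and (d) the large-distance property. Point (a) is immediate, since $\phi_1$ lists $v_1,\ldots,v_l$ as $u_1,u_l,u_2,u_{l-1},\ldots$, a permutation of $V(K_l)$, and every $\phi_i$ arises from $\phi_1$ by adding $i-1$ to all subscripts modulo $l$, hence is again a bijection carrying $P$ to a Hamiltonian path.

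For (b) I would use the cyclic difference $b-a\in\mathbb{Z}_l$ of an edge $\{u_a,u_b\}$. Since $E(\phi_i(P))$ is the shift of $E(\phi_1(P))$ by $i-1$, two copies $\phi_i(P)$ and $\phi_j(P)$ meet in an edge exactly when $E(\phi_1(P))$ contains two edges related by a shift $s$ of circular distance $1\le |s|\le k-1$. Reading along $\phi_1(P)$ the oriented differences are $-1,+2,-3,+4,\ldots$, which are pairwise distinct modulo $l$ (the usual graceful labelling of a path); hence a coincidence under a nonzero shift would force one edge to have oriented difference $+\delta$ and the other $-\delta$, and by the reflective symmetry of the zigzag these two edges differ by a shift of circular distance $\lfloor l/2\rfloor\ge k$. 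As $|s|\le k-1<k$, no coincidence occurs.

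For (c) and (d), fix a vertex $v_m$: its images $\phi_1(v_m),\ldots,\phi_k(v_m)$ are $k$ consecutive residues modulo $l$ (namely $u_{s+i-1}$ or $u_{l-s+i}$ according as $m$ is odd or even), hence distinct because $k\le l$, which is dispersedness. For (d) I would establish the ``column-span'' fact behind (\ref{eq 2}): for each $c$, the positions $m$ with $\phi_i(v_m)=u_c$ for some $i\in\{1,\ldots,k\}$ lie in an interval of length at most $2k-2$. The clean way is to note $\phi_i(v_m)=u_c$ iff $\phi_1(v_m)=u_{c-i+1}$, so the position of $u_c$ in $\phi_i$ equals the position of $u_{c-i+1}$ in $\phi_1$; as $i$ runs over $1,\ldots,k$ the subscript $c-i+1$ runs over $k$ consecutive values, and the position of $u_d$ in $\phi_1$ rises in steps of $2$ to a single peak near $l$ and then falls in steps of $2$, so any $k$ consecutive subscripts give $k$ positions inside a window of width $2(k-1)=2k-2$. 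Consequently, if $|a-b|\ge 2k-1$ then $v_a$ and $v_b$ never share an image, so all $2k$ values $\phi_i(v_a),\phi_j(v_b)$ are distinct.

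The routine parts are (a) and (c); the real content lies in the two modular arguments. In (b) the delicate step is confirming that the $+\delta$ and $-\delta$ edges are always separated by circular distance $\ge\lfloor l/2\rfloor$, together with the boundary case $\delta=l/2$ (only for even $l$), where the edge is self-paired and contributes no partner. In (d) the care lies in the odd/even parity bookkeeping, the cyclic wraparound of the subscript $c-i+1$, and the single irregular step at the peak, all of which must be tracked to confirm that the window really has width $2k-2$. I expect this parity and wraparound bookkeeping, rather than any conceptual difficulty, to be where the argument must be written out carefully.
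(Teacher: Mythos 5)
Your overall route is the same as the paper's: you take the construction in (\ref{eq 1}) as given, and your part (d) is exactly the paper's ``column-span'' argument behind (\ref{eq 2}) (the paper itself disposes of edge-disjointness with ``one can check,'' so your part (b) is the only place where you are supplying an argument the paper does not spell out). Parts (a), (c) and (d) are correct as written; in particular your observation that the position of $u_d$ in $\phi_1$ changes by at most $2$ between cyclically consecutive subscripts, so that $k$ consecutive subscripts occupy a window of width at most $2k-2$, is a clean justification of the paper's claim $|s-t|\le 2k-2$.

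There is, however, one concrete false step in (b). The oriented differences $-1,+2,-3,+4,\ldots$ are pairwise distinct as \emph{integers} (that is the graceful-labelling fact), but they are \emph{not} pairwise distinct modulo $l$ when $l$ is odd: for $l=7$ the list is $6,2,4,4,2,6 \pmod 7$, since $-3\equiv +4$. Consequently your dichotomy ``a coincidence forces one edge to carry $+\delta$ and the other $-\delta$'' is wrong for odd $l$: the two edges of cyclic length $\delta$ then carry the \emph{same} residue. The argument survives because the correct invariant is the unordered pair $\{\delta,-\delta\}$ (equivalently the integer absolute difference), and gracefulness shows that each cyclic length $\delta<l/2$ is realised by exactly the two edges $e_\delta$ and $e_{l-\delta}$ (with $e_{l/2}$ self-paired when $l$ is even, as you note). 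One then checks directly that $e_{l-m}=e_m+s$ with $s\in\{\lfloor l/2\rfloor,\lceil l/2\rceil\}$, so the shift separating any colliding pair has circular distance $\lfloor l/2\rfloor\ge k$, while the shifts $i-j$ actually used have circular distance at most $k-1$. Note also that your mechanism for this last fact --- ``the reversal of the zigzag is a rotation by $l/2$'' --- is literally true only for even $l$; for odd $l$ the reversal is not a rotation and the pairs $e_m,e_{l-m}$ must be matched up by the direct computation above. With these two repairs the proof is complete.
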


\begin{lem}\label{lemma 2.11} Let $k$ be an integer with $k\geq 4$. Label $C_l,L(l,s)$ and $D(l,s,t)$ as defined. Then the following statements are true.

(i) The cycle $C_l$ with $l\geq 2k+1$ has a $k$-placement such that all vertices except $v_1$ are k-placed.

(ii) The lasso $L(l,s)$  with $s\geq 2k+1$ has a $k$-placement such that all vertices except $v_1$ are $k$-placed.

(iii) The double lasso $D(l,s,t)$ with $s\geq 2k+1$, $t\geq 2k+1$ has a $k$-placement such that all vertices except $v_1,v_l$ are $k$-placed.
\end{lem}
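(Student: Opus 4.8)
The common plan is to delete from each graph precisely the vertex or vertices that are permitted to remain un-$k$-placed, observe that what is left is a single path long enough for Lemma \ref{lemma 2.1}, place that path dispersedly, and then reinsert the deleted vertices as $k$-fixed vertices sitting on fresh vertices of the host complete graph. Everything rests on the distance property of Lemma \ref{lemma 2.1}: if two vertices of the path have indices differing by at least $2k-1$, then their $2k$ images under $\phi_1,\dots,\phi_k$ are pairwise distinct. The hypotheses $s,t\ge 2k+1$ are tuned exactly so that the relevant pairs of neighbours are that far apart, and the inequalities $l\ge 2k+1$, etc., exactly guarantee that the fixed vertices have enough incident edges in $K_n$.

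Since $C_l\cong L(l,l)$, I would prove (i) and (ii) together by treating the lasso $L(l,s)$ for $2k+1\le s\le l$, whose vertex $v_1$ has neighbours $v_2$ and $v_s$. Deleting $v_1$ leaves the path $v_2v_3\cdots v_l$ on $l-1\ge 2k$ vertices, which Lemma \ref{lemma 2.1} places dispersedly in $K_{l-1}$. On that path $v_2$ and $v_s$ have indices differing by $s-2\ge 2k-1$, so $\phi_1(v_2),\dots,\phi_k(v_2),\phi_1(v_s),\dots,\phi_k(v_s)$ are $2k$ distinct vertices. Adjoining one new vertex $w$, set $\phi_i(v_1)=w$ for every $i$ and add the edges $w\phi_i(v_2)$ and $w\phi_i(v_s)$. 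These $2k$ edges are incident with $w$, hence disjoint from every path edge, and they are pairwise distinct because their other endpoints are the $2k$ distinct vertices just listed (here $2k\le l-1$ is what gives $w$ enough room). The outcome is a $k$-placement of $L(l,s)$ in which only $v_1$, now $k$-fixed, fails to be $k$-placed.

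For (iii) the same device is applied at both ends in the principal case $s<l$, where $v_1\not\sim v_l$. Then $l>s\ge 2k+1$ gives $l\ge 2k+2$, so deleting $v_1$ and $v_l$ leaves the path $v_2\cdots v_{l-1}$ on $l-2\ge 2k$ vertices, again placed dispersedly. The neighbours of $v_1$ are $v_2,v_s$ (indices differing by $s-2\ge 2k-1$) and those of $v_l$ are $v_{l-1},v_{l-t+1}$ (indices differing by $t-2\ge 2k-1$), so each pair supplies $2k$ distinct images. Taking two fresh vertices $w_1,w_2$ with $\phi_i(v_1)=w_1$, $\phi_i(v_l)=w_2$, I attach the two edges at $v_1$ through $w_1$ and the two at $v_l$ through $w_2$. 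The $w_1$-edges and the $w_2$-edges are internally distinct by the distance property, mutually disjoint since $w_1\ne w_2$, and disjoint from the path edges; because $v_1\not\sim v_l$, no edge $w_1w_2$ is ever required. This yields the claimed $k$-placement with only $v_1,v_l$ left un-$k$-placed.

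The generic argument is routine once Lemma \ref{lemma 2.1} is invoked, so the point of real care is the degenerate configuration $s=l$ in (iii). There $v_1$ and $v_l$ become adjacent and $D(l,l,t)$ is in fact a cycle with a chord, i.e.\ a theta graph rather than a bridged pair of cycles, and one cannot fix both $v_1$ and $v_l$ since the edge $v_1v_l$ would then repeat identically in every copy. To handle it I would keep one endpoint $k$-placed: delete only $v_l$, place the path $v_1\cdots v_{l-1}$ dispersedly, and reattach $v_l$ through a fixed vertex using its three neighbours $v_{l-1},v_1,v_{l-t+1}$; the edge $v_1v_l$ then becomes $\phi_i(v_1)\phi_i(v_l)$ with the distinct $\phi_i(v_1)$, so it no longer repeats. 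This variant needs $v_1$ and $v_{l-t+1}$ to be far apart as well, i.e.\ index gap $l-t\ge 2k-1$, which is where an extra input (such as the large-girth assumption in the intended application, forcing all three theta-paths to be long) must be used. Pinning down this boundary case, rather than the main construction, is the chief obstacle.
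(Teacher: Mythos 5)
Your construction is exactly the paper's: delete $v_1$ (and $v_l$), place the remaining path dispersedly via Lemma \ref{lemma 2.1}, and reattach the deleted vertices as $k$-fixed vertices, using the index-gap property (\ref{eq 2}) with $s-2\geq 2k-1$ and $t-2\geq 2k-1$ to conclude that the $2k$ new edges at each reattached vertex are pairwise distinct. Your concern about the boundary case $s=l$ in (iii) is legitimate but applies equally to the paper's own proof, which likewise needs $v_s$ to lie on the path $v_2\cdots v_{l-1}$ and hence silently assumes $s<l$; in every application in the paper (Corollary \ref{theorem 13} and Section \ref{section 4}) the double lasso is taken with $s,t<l$, so this does not affect the main results.
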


\begin{proof} It suffices to prove (ii) and (iii) as $C_l=L(l,l)$. To prove (iii), let  $P_{l-2}=v_2v_3\cdots v_{l-1}$ and define a dispersed $k$-placement  $\Phi(P_{l-2})$ as (\ref{eq 1}). Moreover, (\ref{eq 2}) implies  that $\phi_i(v_2)$, $\phi_j(v_{s})$ (and $\phi_i(v_{l-1})$, $\phi_j(v_{l-t+1})$) are pairwise distinct for $i,j\in\{1,2,\ldots,k\}$. Adding  vertices $v_1$, $v_l$ and edges $v_1\phi_i(v_2)$, $v_1\phi_i(v_s)$, $v_l\phi_i(v_{l-1})$ and $v_l\phi_i(v_{l-t+1})$ for $1\leq i\leq k$, we obtain a $k$-placement of $D(l,s,t)$ such that all vertices except $v_1,v_l$ are $k$-placed.

The proof of (ii) is similar. Let  $P_{l-1}=v_2v_3\cdots v_{l}$. Construct a dispersed $k$-placement of $P_{l-1}$ ($\Phi(P_{l-1})$) as (\ref{eq 1})  and then add a vertex $v_1$ and  edges $v_1\phi_i(v_2)$, $v_1\phi_i(v_s)$, we obtain a $k$-placement of $L(l,s)$ such that all vertices except $v_1$ are $k$-placed.\qed\end{proof}

Using Lemma \ref{lemma 2.11}, we claim that  $(n,\,n+1)$-graphs with large girth and  minimum degree at least 2 are $k$-placeable.

\begin{corollary}\label{theorem 13} Let $k$ be an integer with $k\geq 4$. If $G$ is an $(n,\,n+1)$-graph with $g(G)\geq 2k+1$ and $\delta(G)\geq 2$, then $G$ is $k$-placeable.\end{corollary}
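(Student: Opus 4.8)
The plan is to split $G$ into its connected components, $k$-place each component with Lemma \ref{lemma 2.11}, and then glue the placements together with Lemma \ref{lem 4}. The only component requiring real work will be the one carrying the extra edge.

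First I would pin down the component structure. Write $G=C_1\uplus\cdots\uplus C_m$. Since $\delta(G)\ge 2$, every component has at least as many edges as vertices, so $e(C_i)-v(C_i)\ge 0$ for each $i$, and summing gives $\sum_i\bigl(e(C_i)-v(C_i)\bigr)=e(G)-v(G)=1$. Hence exactly one component, say $G_0$, satisfies $e(G_0)=v(G_0)+1$ (circuit rank $2$), while every other component $C_i$ has $e(C_i)=v(C_i)$ together with $\delta(C_i)\ge 2$; as the degree sum then equals $2v(C_i)$ with every degree at least $2$, each such $C_i$ has all degrees equal to $2$ and is therefore a cycle. Each of these cycles has length at least $g(G)\ge 2k+1$, so each is $k$-placeable by Lemma \ref{lemma 2.11}(i), and moreover $g(G_0)\ge g(G)\ge 2k+1$.

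The core step is the single circuit-rank-$2$ component $G_0$. Being connected with $\delta(G_0)\ge 2$ and exactly two independent cycles, $G_0$ has degree sequence consisting of two vertices of degree $3$ (or one of degree $4$) with all other vertices of degree $2$; suppressing the degree-$2$ vertices shows that $G_0$ is a \emph{double lasso} $D(l,s,t)$, namely two cycles joined by a path that are vertex-disjoint (dumbbell), share a single vertex (figure-eight), or share a path (theta), according to whether the two cycles are disjoint or overlap. I would then exhibit $G_0$ as a $D(l,s,t)$ in which $s$ and $t$ are lengths of two cycles of $G_0$, so that $s,t\ge g(G_0)\ge 2k+1$, and invoke Lemma \ref{lemma 2.11}(iii) to conclude that $G_0$ is $k$-placeable. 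Finally I would reassemble: taking $U=\emptyset$ and viewing $G$ as the disjoint union of one component $A$ with the union $B$ of the remaining components, the triple $(A,\emptyset,B)$ trivially meets condition (i) (no edge runs between distinct components) and condition (ii) (each part is $k$-placeable by the previous steps), so Lemma \ref{lem 4}, applied inductively over the components, yields a $k$-placement of $G$.

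The hard part is the middle step, and inside it the \emph{theta} subcase where the two cycles of $G_0$ overlap in a path. The point is not merely to recognize a double lasso, but to produce a \emph{non-degenerate} representation for which the construction behind Lemma \ref{lemma 2.11}(iii) actually runs. The delicate extreme is a cycle carrying a single chord (a connecting path of length one): here the obvious labelling places a degree-$3$ vertex at a path endpoint, i.e. $s=l$, so the chord $v_1v_s$ becomes $v_1v_l$ and maps to the same edge in every copy, destroying edge-disjointness. To avoid this I would choose a spanning path $v_1\cdots v_l$ of $G_0$ whose two endpoints have degree $2$ and along which both degree-$3$ vertices are interior, so that the two leftover edges are genuine chords $v_1v_s$ and $v_lv_{l-t+1}$ attached to distinct endpoints, with $s$ and $t$ equal to honest cycle lengths $\ge 2k+1$. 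Checking that such a path always exists—using that at most one of the three defining paths of a theta can have length $1$, since $g(G_0)\ge 2k+1>2$, so at least two of the three branches supply interior vertices to serve as endpoints—is the one genuinely careful verification, after which the distinctness requirements $|2-s|\ge 2k-1$ and $|t-2|\ge 2k-1$ needed in Lemma \ref{lemma 2.11}(iii) follow immediately from $s,t\ge 2k+1$.
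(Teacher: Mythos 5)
Your proof is correct and follows essentially the same route as the paper: peel off cycle components via Lemma \ref{lemma 2.11}(i), observe that the single remaining circuit-rank-two component is a double lasso $D(l,s,t)$ whose parameters $s,t$ are genuine cycle lengths and hence at least $2k+1$, and apply Lemma \ref{lemma 2.11}(iii), gluing with Lemma \ref{lem 4}. The paper merely asserts $G\cong D(n,s,t)$ with $3\leq s,t<n$; your explicit check that a non-degenerate labelling (with both path endpoints of degree $2$) exists in the theta subcase fills in a detail the paper leaves implicit, but it is not a different approach.
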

\begin{proof} Clearly, $\Delta(G)\geq 3$. From Lemma \ref{lemma 2.11} (i), we assume  that $G$ contains no cycle as a component. Since $\delta(G)\geq 2$ and $\sum _{v\in V(G)}d_G(v)=2n+2\geq \Delta(G)+2(n-1)$, we derive that $3\leq\Delta(G)\leq 4$. Then $G\cong D(n,s,t)$ with $s\geq 2k+1,t\geq 2k+1$ and $3\leq s,t<n$. And if $\Delta(G)=4$, then $s=n-t+1$. Lemma \ref{lemma 2.11} (iii) implies that the corollary holds.\qed\end{proof}

\begin{lem}\label{lemma 2.7} The following statements are true.

(i) (\cite{halerwang2014four-p}) Each of $Q(2,2,3)$ and $Q(2,2,2,2)$ has a dispersed $4$-placement.

(ii)  $C_{l}\uplus Q(2,2,2)$ with $l\geq 9$  has a dispersed 4-placement.

(iii)  Let $s,l_i$ $(1\leq i\leq s)$ be positive integers with $s\geq 2$. If $\sum_{i=1}^sl_i\geq 3$, then $\uplus_{i=1}^s P_{l_i}\uplus K_1$ has a dispersed $4$-placement.

(iv) $2K_1\uplus Q(n_1,n_2,n_3)$ with $2\leq n_1\leq n_2\leq n_3$ has a $4$-placement such that $2K_1$ and all nodes of $Q(n_1,n_2,n_3)$ are $4$-placed.
\end{lem}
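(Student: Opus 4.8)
The plan is to build all three placements from the single engine supplied by Lemma~\ref{lemma 2.1}: a path on $\geq 2k=8$ vertices has a dispersed $k$-placement via the shift construction (\ref{eq 1}), and by (\ref{eq 2}) any two vertices at distance $\geq 2k-1=7$ in the path receive $2k$ pairwise distinct images. Two elementary principles will be used repeatedly. First, deleting edges from a (dispersed) $k$-placement leaves a (dispersed) $k$-placement, since dispersedness is a condition on vertices only while edge-disjointness is inherited by subgraphs. Second, the \emph{fresh-apex} device from the proof of Lemma~\ref{lemma 2.11}: if $w$ is a new vertex of $K_n$ not used by a placement of $G-w$ and the neighbours of $w$ in $G$ receive pairwise distinct images over all copies, then joining $w$ to those images gives a $k$-placement of $G$ in which $w$ is $k$-fixed but every old vertex keeps its status. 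Throughout $k=4$, and part (i) is taken as given.

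For (iii), concatenate $P_{l_1},\dots,P_{l_s}$ and the $K_1$ end to end into one path $P_N$ on $N=\sum_i l_i+1$ vertices; the target $\uplus_{i=1}^s P_{l_i}\uplus K_1$ is precisely $P_N$ with the $s$ ``bridge'' edges joining consecutive blocks removed. When $N\geq 8$, Lemma~\ref{lemma 2.1} gives a dispersed placement of $P_N$, which by the deletion principle restricts to a dispersed placement of the target. The hypotheses $s\geq 2$ and $\sum_i l_i\geq 3$ leave only finitely many cases with $4\leq N\leq 7$; each has at most $\sum_i l_i-s$ edges, comfortably fewer than $\binom{N}{2}$, and a dispersed placement is produced directly by a cyclic (rotation) scheme on $\mathbb{Z}_N$, which I would record case by case.

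For (ii) and (iv) the target is no longer a subgraph of a path: the cycle of $C_l\uplus Q(2,2,2)$ and the degree-$3$ centre of a spider each force one non-path edge (a \emph{chord}). The common plan is to lay the vertices out as a path so that every target edge is either a consecutive pair (distance $1$, handled by the path placement) or a chord whose endpoints sit at distance $\geq 7$ in the layout. For (ii) I would order the vertices $b_2,b_1,c_1,\dots,c_l,a_2,a_1,v,d_1,d_2$, so that the cycle-closing edge $c_1c_l$ and the spider edge $vb_1$ are the only chords and both join far-apart positions; deleting the two bridges and adding these two chords yields $C_l\uplus Q(2,2,2)$, and by (\ref{eq 2}) the four copies of each chord are internally vertex-disjoint. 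For (iv), since $|V|=3+\sum_i n_i\geq 9$, I would prefer the fresh-apex device: delete the centre $v$, leaving $P_{n_1}\uplus P_{n_2}\uplus P_{n_3}\uplus 2K_1$, place this by (iii) with the three heads $v^1_1,v^2_1,v^3_1$ mutually far apart so that their $12$ images are distinct by (\ref{eq 2}), and reattach $v$ as a fixed apex. This keeps the nodes $v^i_{n_i-1}$ and the $2K_1$ dispersed while letting $v$ be $4$-fixed, which is all (iv) demands; the smallest case $(n_1,n_2,n_3)=(2,2,2)$ I would dispatch from part (i), since $2K_1\uplus Q(2,2,2)$ is a spanning subgraph of $Q(2,2,2,2)$.

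The hard part will be the edge-disjointness of the chords against the retained path edges. A short computation with (\ref{eq 1}) shows that the consecutive-edge images already realize every circular length $1,2,\dots,\lfloor N/2\rfloor$, so a chord cannot be separated from the path edges by length alone; instead one must exploit the extra vertices (the seven of $Q(2,2,2)$ in (ii), or the spacing room in (iv)) to position each block so that the few same-length copies are offset and hence distinct. Likewise, securing the $12$ distinct head-images in (iv) breaks down for short legs, where the heads cannot be placed $7$ apart, and those bounded instances must be closed by explicit tables or further part-(i)/subgraph arguments. Checking that the requisite offsets can always be arranged — a finite but delicate bookkeeping that leans on $l\geq 9$ and on $\sum_i n_i$ being large in the generic range — is the technical heart of the argument.
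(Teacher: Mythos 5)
Your part (iii) follows the paper's route (concatenate into one path, apply Lemma~\ref{lemma 2.1} once the total order reaches $8$, and settle the finitely many small cases by explicit schemes), but parts (ii) and (iv) each contain a genuine gap --- one that you flag yourself and then leave open. In (ii), after laying all $l+7$ vertices on a single path, the two chords $c_1c_l$ and $vb_1$ must be shown edge-disjoint from the retained path edges of all four copies; as you observe, the shift placement (\ref{eq 1}) already uses every circular length, so (\ref{eq 2}) only makes the four copies of a chord disjoint from \emph{each other}. No mechanism is offered for the required ``offsets,'' and this is precisely the hard part. The paper sidesteps the collision problem completely: it places $P_{l-1}$ and $Q(2,2,3)$ dispersedly in \emph{disjoint} vertex sets (Lemma~\ref{lemma 2.1} and part (i)), deletes the last edge $v_3^3v_2^3$ of the long leg in every copy, and joins $\phi_i(v_3^3)$ to the images of both ends of $P_{l-1}$; the new edges run between the two blocks, so they cannot meet any old edge, and they are pairwise distinct because $v_3^3$ and the path ends are $4$-placed. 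Some such ``separate blocks, then bridge'' device is what your single-line layout is missing.

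In (iv), the fresh-apex plan needs the twelve images $\phi_i(v_1^j)$ of the three heads to be pairwise distinct, which through (\ref{eq 2}) forces the heads to sit pairwise at distance at least $7$ in the layout. Whenever two legs are short this is impossible regardless of the third leg: for $(n_1,n_2,n_3)=(2,2,m)$ the heads $v_1^1,v_1^2$ can only be $7$ apart if the leg-$3$ block separates them, and then the head $v_1^3$, lying at an end of that block, is within distance $4$ of one of them. So the exceptional set is the infinite family $(2,2,m)$, not the ``bounded instances'' you propose to table, and part (i) only rescues $m\in\{2,3\}$. The paper instead absorbs the two isolated vertices into the graph: adding three edges that string them between the tips of legs $2$ and $3$ yields the lasso $L(n_1+n_2+n_3+3,\,n_2+n_3+3)$, and Lemma~\ref{lemma 2.11}(ii) applies as soon as $n_2+n_3\geq 6$, with the single non-$4$-placed vertex arranged to be $v_1^3$ (neither a node nor in $2K_1$); the two leftover cases $(2,2,2)$ and $(2,2,3)$ come from part (i) and Observation~\ref{observation 1}. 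As written, your argument does not establish (ii) or (iv).
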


\begin{proof}
(ii) Label $Q(2,2,2)$ as defined. Lemma \ref{lemma 2.1} and (i) imply that each of $P_{l-1}$ and $Q(2,2,3)$ has a dispersed 4-placement. Let $u,v$ be the end-vertices of $P_{l-1}$. After adding edges $\phi_i(v_3^3)\phi_i(u)$, $\phi_i(v_3^3)\phi_i(v)$ and deleting edges $\phi_i(v_3^3v_2^3)$ for $1\leq i\leq 4$, we obtain a dispersed 4-placement of $C_l\uplus Q(2,2,2)$.

(iii) It suffices to prove the case of $s=2$. If $l_1+l_2\geq 7$, Lemma \ref{lemma 2.1} implies that  $P_{l_1}\uplus P_{l_2}\uplus K_1$ has a dispersed $4$-placement. If $3\leq l_1+l_2\leq 6$, the dispersed $4$-placement  $\Phi(P_{l_1}\uplus P_{l_2}\uplus K_1)$ is exhibited in the Fig. \ref{fig7}. The one shown by bold lines is $\phi_1(P_{l_1}\uplus P_{l_2}\uplus K_1)$. For $i=2,3$, $\phi_i$ can be obtained by rotating $\phi_{i-1}$ one `unit' counterclockwise. And  one can always put $\phi_4$ as shown by thin dashed lines in Fig. \ref{fig7} to obtain a dispersed $4$-placement. For example, see Fig. \ref{fig7} (d), let $P_4=v_1v_2v_3v_4$ and put $\phi_4(v_4)$ and  $\phi_1(v_1),\phi_2(v_3)$  on the same vertex (i.e. the top vertex), then we get a dispersed $\Phi(P_4\uplus 2K_1)$.

(iv) Adding three edges between $2K_1$ and $Q(n_1,n_2,n_3)$, we obtain $L(n_1+n_2+n_3+3,n_2+n_3+3)$. If $n_2+n_3\geq 6$, then $n_3\geq 3$ and $2K_1\uplus Q(n_1,n_2,n_3)$ has a 4-placement such that all vertices except $v_{1}^3$ are 4-placed by Lemma \ref{lemma 2.11} (ii). Thus (iv) holds. Therefore, we may assume that $n_1=n_2=2$, $n_3=2$ or $n_3=3$. The graph $2K_1\uplus Q(2,2,2)$ has a dispersed 4-placement by (i) ($Q(2,2,2,2)$). Furthermore, $2K_1\uplus Q(2,2,3)$  has a  $4$-placement such that $2K_1$ and the nodes of $Q(n_1,n_2,n_3)$ are $4$-placed by Observation \ref{observation 1} ($U=\{v_3^3\}$).\qed\end{proof}
\begin{figure}[H]\centering
\subfigure[$\Phi(P_2\uplus 2K_1)$] { \label{fig:a}
\includegraphics[width=0.15\columnwidth]{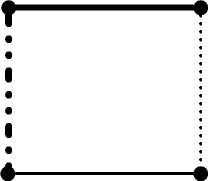}
}
\qquad
\subfigure[$\Phi(2P_2\uplus K_1)$] { \label{fig:b}
\includegraphics[width=0.15\columnwidth]{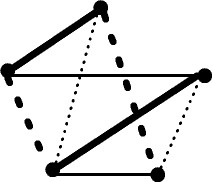}
}
\qquad
\subfigure[$\Phi(P_3\uplus 2K_1)$] { \label{fig:c}
\includegraphics[width=0.15\columnwidth]{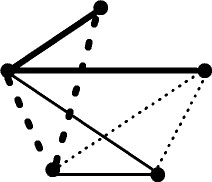}
}
\qquad
\subfigure[$\Phi(P_4\uplus 2K_1)$] { \label{fig:d}
\includegraphics[width=0.15\columnwidth]{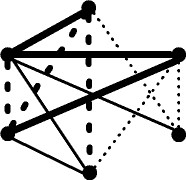}
}
\qquad
\subfigure[$\Phi(P_3\uplus P_2\uplus K_1)$] { \label{fig:e}
\includegraphics[width=0.15\columnwidth]{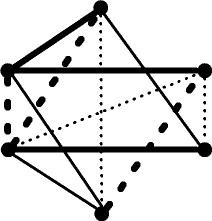}
}
\qquad
\subfigure[$\Phi(P_4\uplus P_2\uplus K_1)$] { \label{fig:f}
\includegraphics[width=0.15\columnwidth]{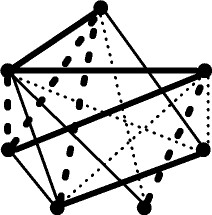}
}
\qquad
\subfigure[$\Phi(2P_3\uplus K_1)$] { \label{fig:g}
\includegraphics[width=0.15\columnwidth]{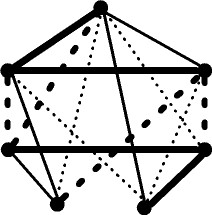}
}
\qquad
\subfigure[$\Phi(P_5\uplus 2K_1)$] { \label{fig:h}
\includegraphics[width=0.15\columnwidth]{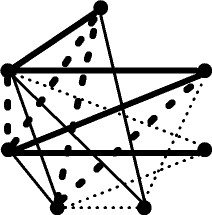}
}
\caption{$\Phi(P_{l_1}\uplus P_{l_2}\uplus K_1)$}
\label{fig7}  
\end{figure}

\.{Z}ak \cite{zak2011} proved that a graph $G$ on $n$ vertices is $k$-placeable if $2(k-1)\Delta(G)^{2}<n$. That is, every graph of order $n>8(k-1)$ with $\Delta(G)=2$ is $k$-placeable. In fact, the lower bound $8(k-1)$ can be improved  by Lemma \ref{lemma 2.11} (i) and the following Theorem \ref{thm 2.8}.

\begin{thm}\label{thm 2.8}
Let $k$ be a positive integer. A graph $G$ of order $n\geq 6k-4$ with $\Delta(G)=2$ is $k$-placeable.\end{thm}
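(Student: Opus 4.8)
The plan is to use the fact that $\Delta(G)=2$ forces $G$ to be a disjoint union of paths and cycles, and then to place each kind of component with the machinery already built. Write $G=P\uplus C_{m_1}\uplus\cdots\uplus C_{m_t}$, where $P$ is the union of all path components and $C_{m_1},\dots,C_{m_t}$ are the cycle components with $m_1\le\cdots\le m_t$. I would first dispose of $P$: concatenating its path components end to end gives a single path on the same $r:=v(P)$ vertices that contains $P$ as a spanning subgraph, so any $k$-placement of that long path restricts to one of $P$. If $r\ge 2k$ this placement exists and is dispersed by Lemma \ref{lemma 2.1}; if $r<2k$ I instead keep these few vertices aside, to be used as padding below. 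Hence the linear-forest part is essentially free.

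For the cycles my general strategy is the one underlying Lemma \ref{lemma 2.11}(i): for $C_m=v_1v_2\cdots v_m v_1$, delete the closing edge $v_mv_1$, place the resulting path dispersedly by (\ref{eq 1}), and then add back the $k$ images of $v_mv_1$. By (\ref{eq 2}) the vertices $\phi_i(v_m)$ and $\phi_j(v_1)$ are pairwise distinct exactly when the two ends are at distance at least $2k-1$ on the path, i.e. when $m\ge 2k+1$. Thus every long cycle ($m\ge 2k+1$) is $k$-placeable in its own block by Lemma \ref{lemma 2.11}(i), and these long-cycle blocks, together with the path block, are assembled into a $k$-placement of their union by the disjoint-union case of Lemma \ref{lem 4}.

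The content of the theorem is the short cycles, those with $3\le m\le 2k$, and this is where the hypothesis $n\ge 6k-4$ is spent. After deleting an edge of such a cycle its two ends lie at distance $m-1\le 2k-1$ on the path, so (\ref{eq 2}) no longer separates their images and the naive cut-and-reclose can force two copies of the closing edge to coincide. My plan is to gather all short cycles into one block and give them a dedicated $k$-placement rather than treating them individually. Concretely, I would place the short cycles in a cyclically labelled complete graph $K_N$, where $N$ is the total number of short-cycle vertices, enlarged if necessary by the spare vertices set aside from $P$ (or borrowed from other components), choosing the embedding so that the edges receive pairwise distinct lengths in $\mathbb{Z}_N$; the $k$ copies are then rotations, and distinct lengths force edge-disjointness, exactly as in (\ref{eq 1}). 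A lower bound on $N$ is what makes enough distinct lengths available, and one verifies that $n\ge 6k-4$ always supplies it, the tight configurations being those containing cycles whose length is just below the threshold $2k+1$ of Lemma \ref{lemma 2.11}. The short-cycle block is then combined with the remaining blocks by Lemma \ref{lem 4} with $U=\emptyset$.

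The main obstacle is precisely this short-cycle block: producing an explicit embedding whose $k$ rotations are edge-disjoint when the cycles are too short for (\ref{eq 2}) to help, and making the distinct-length construction close up correctly for all the short cycle lengths at once while staying inside the vertex budget $n\ge 6k-4$. Everything else—the linear forest (Lemma \ref{lemma 2.1}), the long cycles (Lemma \ref{lemma 2.11}), and the final gluing (Lemma \ref{lem 4})—is routine by comparison.
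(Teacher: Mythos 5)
Your overall decomposition (linear forest $+$ long cycles $+$ short cycles, glued by Lemma \ref{lem 4}) is reasonable, and the path and long-cycle blocks are indeed handled by Lemmas \ref{lemma 2.1} and \ref{lemma 2.11}(i). But the proof has a genuine gap exactly where you locate ``the content of the theorem'': the short-cycle block. The specific device you propose there --- embedding the short cycles in a cyclically labelled $K_N$ so that \emph{all edges receive pairwise distinct lengths} in $\mathbb{Z}_N$ --- is arithmetically impossible in general. A disjoint union of cycles on $N'$ vertices has $N'$ edges, while $\mathbb{Z}_N$ admits only $\lfloor N/2\rfloor$ distinct lengths; so you would need $N\geq 2N'$, i.e.\ the padding would have to double the size of the block. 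When $G$ is, say, a disjoint union of triangles (which satisfies $\Delta(G)=2$ and $n\geq 6k-4$), there are no spare vertices at all: $N=N'=n$ and $n>\lfloor n/2\rfloor$, so two edges must share a length. What would actually suffice is the weaker condition that any two distinct edges of the same length sit at positions differing by more than $k-1$ modulo $N$ (a $\rho$-labelling--type condition in the spirit of Rosa), but that is precisely the nontrivial combinatorial core, and your proposal leaves it as an acknowledged ``obstacle'' rather than proving it, with no indication of how $n\geq 6k-4$ would be spent to obtain it. As written, the argument does not close.

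For comparison, the paper avoids all of this with a two-line induction on the number of copies already placed: if $s<k$ copies of $G$ have been removed from $K_n$, the leftover graph $H$ has $\delta(H)\geq n-1-2s\geq n-2k+1\geq \frac{2n-1}{3}$ precisely because $n\geq 6k-4$, and the theorem of Aigner and Brandt \cite{ab1993} then guarantees that $H$ still contains a copy of any graph of maximum degree $2$ on at most $n$ vertices. This is where the bound $6k-4$ is really spent, and it handles short cycles, long cycles and paths uniformly without any explicit construction. If you want to salvage your constructive route, you must either prove the spread-out-lengths condition for unions of short cycles or fall back on a black-box embedding result of this kind.
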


\begin{proof} We proceed by induction on $k$. Clearly,  $G$ is $1$-placeable because the complete graph $K_n$ contains $G$.  Assume that $G$ is $s$-placeable with $1\leq s<k$. Now, we prove that $G$ is $(s+1)$-placeable. Let $H$ be the graph obtained  from $K_n$ by deleting the edges of $s$ copies of $G$. Clearly, $\delta(H)\geq n-1-2s\geq \frac{2n-1}{3}$ for $1\leq s<k$ because  $n\geq 6k-4$. So the proof of the theorem is completed by the result of Aigner and Brandt \cite{ab1993}: A graph $H$ of order $n$ with $\delta(H)\geq \frac{2n-1}{3}$  contains any graph $G$ of order at most $n$ with $\Delta(G)=2$.\qed\end{proof}

It should be noted that a 2-factor is an $(n,\,n)$-graph. The following corollary follows immediately by Lemma \ref{lemma 2.11} (i) and Theorem \ref{thm 2.8}.
\begin{corollary}
Let $k$ be a positive integer. Any 2-factor of order $n$ with $n\geq 6k-4$ is $k$-placeable. Moreover, a cycle of order $n$ with $n\geq 2k+1$ is $k$-placeable for $k\geq 4$.
\end{corollary}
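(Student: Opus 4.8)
The plan is to deduce both assertions directly from the two results that immediately precede the statement, so the whole task reduces to checking that the relevant hypotheses are met; there is no substantive new argument to make.

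First I would dispose of the 2-factor claim. Since every component of a 2-factor is a cycle, every vertex has degree exactly $2$, so in particular $\Delta(G)=2$. As $G$ has order $n\geq 6k-4$, the hypotheses of Theorem~\ref{thm 2.8} are met verbatim, and that theorem hands us a $k$-placement of $G$. Thus any 2-factor of order $n\geq 6k-4$ is $k$-placeable, which is exactly the first sentence of the corollary.

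Next I would treat the single-cycle claim. A cycle of order $n$ is precisely $C_l$ with $l=n$, and the assumptions $n\geq 2k+1$ together with $k\geq 4$ place us in the scope of Lemma~\ref{lemma 2.11}(i). That lemma already produces a $k$-placement of $C_n$ (in fact one in which every vertex except $v_1$ is $k$-placed), and the mere existence of such a placement is what it means for $C_n$ to be $k$-placeable. Hence nothing further is required.

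I do not expect a genuine obstacle here: both parts are immediate corollaries, and the only thing to verify is that the degree condition $\Delta(G)=2$ and the length bound $l\geq 2k+1$ align with the hypotheses of Theorem~\ref{thm 2.8} and Lemma~\ref{lemma 2.11}(i). The one point worth flagging is why the two bounds differ: the cycle threshold $n\geq 2k+1$ is far stronger than the generic 2-factor threshold $n\geq 6k-4$, and this is exactly why the second sentence is stated separately. A single long cycle can be packed efficiently by the explicit rotational placement of Lemma~\ref{lemma 2.11}(i), whereas an arbitrary 2-factor must be handled through the extremal Aigner--Brandt input underlying Theorem~\ref{thm 2.8}, which costs the larger order requirement.
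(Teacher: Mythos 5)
Your proposal is correct and matches the paper exactly: the authors likewise derive the corollary immediately from Theorem~\ref{thm 2.8} (applied to a 2-factor, which has $\Delta(G)=2$) and Lemma~\ref{lemma 2.11}(i) (applied to a single cycle with $n\geq 2k+1$ and $k\geq 4$). No further comment is needed.
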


\section{Proof of Theorem \ref{theorem 2}} \label{section 4}

Let $G$ be an $(n,\,n-1)$-graph on  $n\geq 8$ vertices. If $G$ is connected, then Theorem \ref{lem 1111} implies that Theorem \ref{theorem 2} holds. Therefore, we may assume that $G$ is a disconnected $(n,\,n-1)$-graph, then $G$ has at least one cycle. Suppose that $g(G)\geq9$ and $\Delta(G)\leq n-4$. Clearly, $C_9$ is 4-placeable by Lemma \ref{lemma 2.11} (i). Thus we only need to prove sufficiency of Theorem \ref{theorem 2} with $n\geq 10$.

\begin{lem}\label{lemma 3.1} If \,$10\leq n\leq 13$, then $G$ is $4$-placeable.
\end{lem}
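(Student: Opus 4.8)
The plan is to first pin down the coarse structure of $G$ and then place the cycle before anything else. Since $e(G)=n-1$, the number of independent cycles of $G$ equals $c-1$, where $c$ is the number of components. As $G$ is disconnected, $c\ge 2$, so $G$ contains at least one cycle, of length $\ge 9$ by hypothesis. I claim in fact $c=2$. Two independent cycles cannot coexist when $g(G)\ge 9$ and $n\le 13$: two vertex-disjoint cycles need at least $18$ vertices, while the smallest connected bicyclic graph of girth $\ge 9$ (a suitably subdivided theta graph, with the three path-lengths forced to satisfy all three pairwise sums $\ge 9$) already needs $13$ vertices, leaving no room for the extra component that $c\ge 3$ would require. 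Hence $c=2$, and $G=L\uplus T$, where $L$ is unicyclic with its unique cycle $C_\ell$ of length $\ell\ge 9$, and $T$ is a tree; moreover $9\le v(L)\le 12$ and $1\le v(T)\le 4$.

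The heart of the placement is Lemma \ref{lemma 2.11}(i): I first $4$-place $C_\ell$ inside $K_\ell$ so that every cycle vertex except one, say $v_1$, is $4$-placed, and since $\ell\ge 9$ exceeds the number of pendant vertices of $L$, I choose $v_1$ to carry no pendant. I then reintroduce, by repeated use of Observation \ref{observation 1} (or Lemma \ref{lem 2}), the at most three pendant vertices of $L$; each is a leaf attached to an already $4$-placed cycle vertex, so the hypotheses of Observation \ref{observation 1} are met. When $v(T)=1$ this essentially finishes the argument: $L$ is now $4$-placed in $K_{v(L)}$, $K_1$ is trivially $4$-placeable, and since $L$ and $T$ share no edge, Lemma \ref{lem 4} (with $U=\emptyset$, $A=L$, $B=T$) assembles a $4$-placement of $G$.

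The genuine difficulty is $v(T)\ge 2$, where $T\in\{P_2,P_3,P_4,S_3\}$ is \emph{not} $4$-placeable inside $K_{v(T)}$ (too few edges), so the clean component-wise split through Lemma \ref{lem 4} breaks down; note that here $v(L)\le 11$. The remedy is to let the four copies of $T$ spill into the $n-\ell$ still-free vertices together with edges incident to the cycle. The tight case is $\ell=9$, where the four copies of $C_9$ already exhaust $E(K_9)$, so every edge of every $T$-copy must meet a free vertex. Since $T$ is bipartite with bipartition $(X,Y)$ and $\ell=9$ is large relative to $v(T)\le 4$, I realize one side of each copy by free vertices (this side is automatically a vertex cover of the copy) and the other side by cycle vertices chosen pairwise distinct across the four copies whenever $4|Y|\le \ell$; if instead the free side is the smaller one, I keep the free images distinct across copies. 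Either way each edge meets a free vertex and the four copies use distinct endpoints on one fixed side, which makes them edge-disjoint from one another and, being free-incident, edge-disjoint from the cycle copies. This is exactly the data encoded in the dispersed placements of path-forests-with-isolated-vertices in Lemmas \ref{lemma 2.1} and \ref{lemma 2.7}(iii), so the finitely many remaining graphs are dispatched directly.

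I expect the last bookkeeping to be the main obstacle: once the four tree-copies are forced to borrow cycle vertices, they must stay edge-disjoint while simultaneously competing with the reattached pendants of $L$ for the same small pool of free vertices. Verifying that the room always suffices — that is, that the $n-\ell$ free vertices plus the abundant free cross-edges $K_{\ell,\,n-\ell}$ can host the pendants of $L$ and all four copies of $T$ without clashes — is the step that requires the most care, and is where the inequality $\ell\ge 9$ against $v(T)\le 4$ is really used.
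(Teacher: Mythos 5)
Your structural reduction is correct and agrees with the paper: counting $e(G)-v(G)+c$ shows the cycle rank is $c-1$, and girth $\geq 9$ with $n\leq 13$ forces exactly two components, one unicyclic component $L$ with cycle length $\ell\geq 9$ and one tree $T$ with $v(T)\leq 4$. The gap is in how you put $L$ back together. You reattach the at most three off-cycle vertices of $L$ ``by repeated use of Observation \ref{observation 1}'', asserting that each is a leaf attached to an already $4$-placed cycle vertex. That is false in general: the off-cycle part of $L$ may be a pendant path of depth $2$ or $3$ (e.g.\ $G=(C_9$ with a hanging $P_3)\uplus K_1$, $n=13$, which satisfies all hypotheses). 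Observation \ref{observation 1} only guarantees a placement of the enlarged graph; the reattached leaves themselves come out $4$-fixed (all four copies on one new vertex), not $4$-placed, so the hypothesis of Observation \ref{observation 1} fails for the next layer, and Lemma \ref{lem 2} does not rescue this either since it needs $k$ leaves with distinct neighbors and likewise does not promise the new leaves are $4$-placed. The paper avoids this exact trap by never reducing $L$ all the way to its cycle: it deletes leaves so as to retain a \emph{longest} pendant path, i.e.\ a lasso $L(l,s)$, and uses Lemma \ref{lemma 2.11}(ii), whose placement $4$-places every vertex of the pendant path (only one cycle vertex, chosen to be a non-node, is sacrificed); then all deleted vertices really are leaves hanging on $4$-placed vertices and Observation \ref{observation 1} applies in a single shot.

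Your treatment of $v(T)\geq 2$ is also incomplete, and you say so yourself: the four cross-edge copies of $T$ and the reattached pendants of $L$ compete for the same $n-\ell\leq 4$ free vertices and the same edges of $K_{\ell,\,n-\ell}$, and ``verifying that the room always suffices'' is exactly the content of the lemma, not a remark one can defer. The paper dissolves both difficulties at once with one trick worth knowing: reduce $T$ to a longest path $P_t$, join an endpoint of $P_t$ to the tail of the lasso by an auxiliary edge to form a single lasso $L(l+t,s)$, place that by Lemma \ref{lemma 2.11}(ii) with the exceptional vertex chosen to be a degree-$2$ cycle vertex, discard the auxiliary edge, and restore all deleted leaves by Observation \ref{observation 1}. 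If you replace your cycle-plus-leaves reassembly by this lasso-concatenation step, your argument closes.
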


\begin{proof} Since $G$ is a disconnected $(n,\,n-1)$-graph with $g(G)\geq 9$ and $n\leq 13$, $G$ does not contain $D(l,s,t)$ as a subgraph. Moreover, $G$ has exactly two components $A$ and $B$, where $A$ contains a cycle $C_s$ ($s\geq 9$). If $A\not\cong C_s$, delete some leaves of $A$ to obtain $L(l,s)$ with $l$ maximum. Let $w\in V(L(l,s))$ with degree three. Moreover, we can get a lasso $L(l,s)$ from $A$ such that there exists $u\in N_{C_s}(w)$ with $d_G(u)=2$. Similarly, delete some leaves of $B$ to obtain a path $P_t$ with $t$ maximum.  Then add an edge between the leaf of $L(l,s)$ (or any vertex of $A$ if $A\cong C_s$) and a vertex of $P_t$ with degree at most one to get $L(l+t,s)$. Lemma \ref{lemma 2.11} (ii) implies that $L(l+t,s)$ has a 4-placement such that all vertices except  $u$ are 4-placed. Then $G$ has a 4-placement by Observation \ref{observation 1}.\qed\end{proof}

Suppose that $n\geq 14$. We prove Theorem \ref{theorem 2} by induction on $n$ and assume that Theorem \ref{theorem 2} holds for $(n^{\prime},n^{\prime}-1)$-graphs with $10\leq n^{\prime}<n$.  Now we consider the case  $v(G)=n$.

\begin{lem}\label{claim 3.111}
Let $A,B$ be two disjoint induced subgraphs of $G$ and $V(G)=V(A)\cup V(B)$. Suppose that $A$ consists of $x$ trees with $x\geq 3$ and it contains a vertex $u\in V(A)$ such that $E(u,B)=E(A,B)$. If $e(u,B)\geq x$, then $B$ has a $4$-placement in $K_{v(B)}$. \end{lem}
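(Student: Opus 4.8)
The plan is to reduce the claim to the inductive hypothesis applied to $B$, after first pinning down the sparsity and the local structure of $B$ that the hypotheses force. I would begin with a simple edge count. Since $A$ is a forest with $x$ components, $e(A)=v(A)-x$; as every $A$--$B$ edge leaves $u$ we have $e(A,B)=e(u,B)$, and $e(G)=n-1$ with $n=v(A)+v(B)$. Combining these gives
\begin{equation*}
e(B)=v(B)-1+x-e(u,B)\le v(B)-1 ,
\end{equation*}
using $e(u,B)\ge x$. Hence $B$ has at most $v(B)-1$ edges; more precisely the number of tree components of $B$ equals $v(B)-e(B)\ge 1$, while every non-tree component is unicyclic. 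Since $B$ is an induced subgraph of $G$, its cycles are cycles of $G$, so $g(B)\ge 9$ and each unicyclic component has a cycle of length at least $9$.

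The second and decisive step is a girth argument at $u$. Writing $N=N_B(u)$, we have $|N|=e(u,B)\ge x\ge 3$. If $b,b'\in N$ lie in a common component of $B$, then a shortest $b$--$b'$ path together with $ub$ and $ub'$ is a cycle of $G$, whence $\mathrm{dist}_B(b,b')\ge 7$; if they lie in different components the distance is infinite. Thus $N$ contains at least three vertices that are pairwise at $B$-distance at least $7$. I would use this to establish $\Delta(B)\le v(B)-4$: a vertex $w$ with $d_B(w)\ge v(B)-3$ would place all but at most two vertices within distance $2$ of $w$, so the closed ball of radius $2$ about $w$ would contain $v(B)-2$ pairwise ``close'' vertices, leaving too few vertices and, by the edge bound, too few spare edges to accommodate three members of $N$ that are pairwise at distance at least $7$; a short case check yields a contradiction. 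The same separation rules out $B$ being a star and, together with the degree bound, excludes $B$ from the exceptional family $W$.

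With $g(B)\ge 9$, $\Delta(B)\le v(B)-4$ and $B\notin W$ secured, I would finish by induction. If $e(B)=v(B)-1$ and $v(B)\ge 10$, then $B$ is an $(v(B),v(B)-1)$-graph satisfying the hypotheses of Theorem \ref{theorem 2} at an order strictly smaller than $n$ (note $v(B)=n-v(A)\le n-3$), so $B$ is $4$-placeable. If $e(B)<v(B)-1$, i.e. $B$ has several tree components, I would add $v(B)-1-e(B)$ edges joining leaves of distinct tree components; merging a fresh component at each step creates no cycle and raises no affected degree above $2$, so the result is an $(v(B),v(B)-1)$-graph $B'\supseteq B$ still meeting the hypotheses, and a $4$-placement of $B'$ restricts to one of $B$. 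The remaining small orders ($v(B)\le 9$, forcing at most one long cycle plus a few isolated vertices or short trees) I would dispatch directly, using Lemma \ref{lemma 2.11} and Corollary \ref{theorem 13} for the cyclic part, Theorem \ref{lem 1111} for tree components, and Observation \ref{observation 1} together with Lemma \ref{lem 4} to assemble the pieces; isolated vertices contribute no edges and are placed freely.

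The main obstacle is the second step. Sparsity alone does not imply $4$-placeability, and the one genuinely dangerous configuration under the edge bound is a (near-)star, which is not $4$-placeable; the leverage that makes the lemma true is precisely that $e(u,B)\ge x\ge 3$ forces three widely separated attachment points in $B$, and converting this qualitative separation into the quantitative conclusions $\Delta(B)\le v(B)-4$ and $B\notin W$ is where the real work lies. Handling the strictly sparse case $e(B)<v(B)-1$ and the small orders, where Theorem \ref{theorem 2} does not apply verbatim, is a secondary but nontrivial bookkeeping task.
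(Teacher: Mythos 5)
Your overall architecture coincides with the paper's: count edges to see that $B$ is a $(v(B),\,v(B)-1+x-e(u,B))$-graph with at most $v(B)-1$ edges, pad $B$ with edges joining distinct tree components to form an $(l,\,l-1)$-graph $B'$ with the same girth and essentially the same maximum degree, verify the hypotheses of Theorem \ref{theorem 2}, and invoke the induction on $n$. Where you diverge is in how the side conditions $\Delta\leq l-4$, $l\geq 8$ and $B'\notin W$ are certified. The paper extracts all three from a single observation: since $G$ has a cycle, $A$ is a forest and all $A$--$B$ edges meet $u$, either $B$ or $G[V(B)\cup\{u\}]$ contains a cycle of length at least $9$, hence $B$ (and $B'$) contains an induced path of order at least $8$; with $g\geq 9$ this immediately gives $\Delta(B')\leq l-4$, $l\geq 9$, and $B'\notin W$ because no tree in $W$ contains a path of order $8$. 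Your route via three pairwise $B$-distant vertices of $N_B(u)$ can be pushed through to the same conclusions (the near-star configurations you would have to exclude all force two members of $N_B(u)$ to be joined by a path of length at most $4$ in $B$, or force $G$ to be acyclic, either of which is contradictory), but two points in your sketch are not yet proofs. First, the ``short case check'' for $\Delta(B)\leq v(B)-4$ is exactly the nontrivial content and is left unexecuted; note that it genuinely needs the fact that $G$ contains a long cycle passing through $u$ when $B$ is a forest, which you never invoke. Second, your claim that the degree bound ``excludes $B$ from $W$'' cannot be right as stated: the exceptional trees of order at least $8$ in $W$ satisfy $\Delta\leq n-4$ (otherwise Theorem \ref{lem 1111} would not need to list them), so membership in $W$ must be ruled out by a structural feature --- the presence of a path of order $8$, which your separation argument does yield for two members of $N_B(u)$ lying in a common component, but which you should state explicitly. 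Finally, your worry about small orders is unnecessary: the paper shows directly that $v(B)\geq 9$ from $e(u,B)\geq x\geq 3$ and $g(G)\geq 9$, so no separate small-order analysis is required.
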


\begin{proof} Let $v(B)=l$ and $y=e(u,B)$. Recall that $G$ has a cycle and the girth of $G$ is at least 9, so $B$ is an $(l,\,l-1+x-y)$-graph with $g(B)\geq 9$. Also since $G$ contains a cycle, $A$ is a forest and $E(A,B)=E(u,B)$, we conclude that $B$ or $G[V(B)\cup\{u\}]$ contains a cycle with length at least 9. Thus there is an induced  path of order at least 8 in $B$. So $l\geq 8$. In fact, $l\geq 9$ by $y\geq x\geq 3$ and $g(G)\geq 9$. Suppose that $B\cong C^1\uplus C^2\uplus\cdots\uplus C^s$, where $C^i$ is a component of $B$.  Add edges $e_1, e_2, \ldots $ in turn between the components of $B$ until we obtain an $(l,\,l-1)$-graph $B^{\prime}$,  where $e_j=u_ju_{j+1}$ such that $u_j$, $u_{j+1}$ has minimum degree in $\uplus_{i=1}^j C^i$, $C^{j+1}$, respectively.

Obviously $g(B^{\prime})=g(B)\geq 9$ and $v(B^{\prime})=v(B)\geq 9$. In addition, $B^{\prime}$ also contains an induced  path with order at least 8. Thus $\Delta(B^{\prime})\leq l-4$.  Further, $B^{\prime}$ does not belong to $W$ (shown in Fig. \ref{fig1}) because each tree in $W$ does not contain a path of order at least 8. Then by the induction hypothesis, $B^{\prime}$, consequently $B$ has a $4$-placement in $K_{v(B)}$.\qed\end{proof}

\begin{lem}\label{lemma 3.2} If \,$G$ has four distinct leaves  such that they have distinct neighbors, then $G$ is $4$-placeable. Moreover, if $G$ contains four nodes, then $G$ is 4-placeable.
\end{lem}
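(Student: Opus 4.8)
The plan is to derive both statements from Lemma \ref{lem 2} together with the induction hypothesis. For the first statement, I would let $U$ be a set of four leaves of $G$ with pairwise distinct neighbors, so that $|U|=4=k$ and $N_G(U)$ consists of four distinct vertices. By Lemma \ref{lem 2} it then suffices to show that $G-U$ is $4$-placeable in $K_{n-4}$, and I would obtain this from the induction hypothesis, i.e. from Theorem \ref{theorem 2} applied to the $(n-4,\,n-5)$-graph $G-U$. Thus the whole argument reduces to verifying that $G-U$ satisfies the three requirements of that theorem: order at least $10$, maximum degree at most $v(G-U)-4$, and $G-U\notin W$.

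Two of these are immediate. Since $n\ge 14$ we have $v(G-U)=n-4\ge 10$. Moreover, because leaves lie on no cycle, deleting $U$ removes no edge of a shortest cycle; hence $G-U$ still contains a cycle of length at least $9$, so $g(G-U)\ge 9$ and, being non-acyclic, $G-U$ is not a tree, whence $G-U\notin W$ (every graph in $W$ is a tree). The only point needing a genuine argument is the degree bound, and here I would exploit the sparsity forced by the large girth. Since $g(G)\ge 9$, the graph $G$ contains a cycle $C$ of length $\ell\ge 9$, and as $e(G)=n-1$ there are at most $(n-1)-\ell\le n-10$ edges off $C$. Every vertex is incident with at most two edges of $C$, so $\Delta(G)\le 2+(n-10)=n-8$. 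Deleting leaves cannot raise any degree, hence $\Delta(G-U)\le\Delta(G)\le n-8=(n-4)-4$, exactly the bound required. By the induction hypothesis $G-U$ is $4$-placeable, and Lemma \ref{lem 2} then gives that $G$ is $4$-placeable.

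For the second statement I would reduce to the first. If $G$ has four distinct nodes $w_1,w_2,w_3,w_4$, then by definition each $w_i$ has degree at least two and is adjacent to some leaf; choose one such leaf $u_i$ for each $i$. The $u_i$ are pairwise distinct, for if $u_i=u_j$ with $i\ne j$ then this common leaf, having a unique neighbor, would force $w_i=w_j$, contradicting distinctness. Thus $u_1,u_2,u_3,u_4$ are four distinct leaves whose neighbors $w_1,w_2,w_3,w_4$ are distinct, and the first part applies.

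The main (and essentially only) obstacle is confirming that $G-U$ genuinely falls under the induction hypothesis rather than a base case or an exceptional family: the estimate $\Delta(G)\le n-8$ is what secures $\Delta(G-U)\le (n-4)-4$, and the observation that a shortest cycle survives the deletion of leaves is what keeps $G-U$ out of $W$ while preserving $g(G-U)\ge 9$. Once these are in place, both statements follow directly from Lemma \ref{lem 2} and, for the second, from the definition of a node.
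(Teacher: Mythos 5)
Your proof is correct and follows essentially the same route as the paper: delete the four leaves with distinct neighbors, apply the induction hypothesis to the resulting $(n-4,\,n-5)$-graph, lift the placement back via Lemma \ref{lem 2}, and reduce the four-nodes case to the four-leaves case. The only difference is that you spell out the girth-based bound $\Delta(G)\leq n-8$ and the exclusion from $W$, details the paper leaves implicit.
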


\begin{proof} Let $U$ be a set of four leaves such that the vertices in $U$ has distinct neighbors and let $H=G-U$.  Clearly, $H$ is a disconnected $(n-4,\,n-5)$-graph with $g(H)\geq 9$, where $n-4\geq 10$. It follows by  $g(H)\geq 9$ that $\Delta(H)\leq v(H)-4$. Thus $H$ is 4-placeable  by the induction hypothesis. Moreover, Lemma \ref{lem 2} implies that $G$ is 4-placeable. Clearly, if $G$ has four nodes, then it has four distinct leaves  such that they have distinct neighbors. \qed\end{proof}

In the following,  we prove that if there are two components of $G$ each of which is a tree, then $G$ is $4$-placeable. First, we prove a useful claim.
\begin{clm}\label{clm 3.112} Suppose that $G$ is not 4-placeable. If there are three consecutive vertices on a path  with degree sequence  $S$ in $G$, then $G$ contains at most $x$ isolated vertices, where

$$x=\begin{cases}
1,& \text{if } S=(3,2,2),\\
2,& \text{if } S=(3,2,3), \text{ and }\\
3,& \text{if } S=(3,3,3).
\end{cases}$$\end{clm}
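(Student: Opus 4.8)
The plan is to argue by contradiction: assume $G$ is not $4$-placeable, fix a path on which three consecutive vertices $a,b,c$ realise the degree sequence $S$, and suppose for contradiction that $G$ contains at least $x+1$ isolated vertices, where $x$ is the number of entries equal to $3$ in $S$. The aim is to manufacture a $4$-placement of $G$, contradicting the hypothesis. The first thing I would record is the local picture at $a,b,c$: each entry $3$ in $S$ corresponds to a vertex of $\{a,b,c\}$ carrying exactly one extra edge off the path, so there are precisely $x$ such ``branch roots,'' and since $g(G)\ge 9$ each branch hangs off the path as a tree that cannot close a short cycle. I would split each branch into the two cases \emph{bare leaf} (the root is already a node) and \emph{longer path}, since these behave differently in the reattachment step.

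The core construction is to build the $4$-placement directly from Lemma \ref{lemma 2.11}. I would extend the path through $a,b,c$ to a maximal backbone, closing it through the long cycle if present, so that the backbone is a path, a lasso $L(l,s)$, or a double lasso $D(l,s,t)$ whose relevant parameters are all $\ge 2k+1=9$ by the girth bound; Lemma \ref{lemma 2.11} then gives a $4$-placement of the backbone in which every vertex except at most the two ends is $4$-placed, so in particular the branch roots $a,b,c$ are $4$-placed. With the roots $4$-placed, each branch and each surplus isolated vertex can be grafted on: I would reattach the branches by the K\"onig-colouring matching mechanism of Lemma \ref{lem 2}, using the free positions supplied by the isolated vertices (which, carrying no edges, may be mapped with complete freedom) to keep the four copies of each branch edge-disjoint. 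Packaging this as an $(A,U,B)$-structure graph, with $U$ the isolated vertices and $A,B$ the backbone-plus-branches meeting across their single interface, lets Lemma \ref{lem 4} and Observation \ref{observation 1} close the argument and deliver the forbidden $4$-placement.

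Throughout I would lean on the constraint already extracted from Lemma \ref{lemma 3.2}, namely that a non-$4$-placeable $G$ has at most three nodes and leaves sitting on at most three distinct vertices; this is what keeps the set of ``problematic'' high-degree vertices small enough that the branch roots among $a,b,c$ carry essentially all the obstruction. The main obstacle is the sharp accounting that produces the threshold exactly $x$ rather than $x-1$ or $x+1$: I must show that each of the $x$ branches can be absorbed using exactly one isolated vertex as a free slot, while a single additional isolated vertex always tips the balance into a genuine dispersed $4$-placement. This forces a careful case analysis according to whether each branch is a bare leaf or a longer path and according to whether the backbone is a path, a lasso, or a double lasso, checking edge-disjointness of the four copies at each graft.

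A secondary technical point, which I expect to be routine but must be verified, is that the reduced backbone never degenerates into a star or a member of the exceptional family $W$ of Fig. \ref{fig1}: here the girth bound $g(G)\ge 9$ guarantees an induced path of length at least eight, which simultaneously legitimises every appeal to Lemma \ref{lemma 2.11} (all lasso and double-lasso parameters exceed $2k$) and rules out the small exceptional trees, so that the structural lemmas apply without the degenerate cases intervening.
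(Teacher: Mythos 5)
Your contradiction framework is the right one, and your observation that $x$ equals the number of degree-$3$ entries in $S$ is correct, but the proposal never engages with the step that carries essentially all of the difficulty. The paper's proof sets $M=G[\{s_1,\dots,s_{x+1},u,v,w\}]$ and $H=G-M$, packs $H$ into $K_{v(H)}$ by the induction hypothesis (after checking $v(H)\ge 8$, $\Delta(H)\le v(H)-4$, $H\notin W$), and then spends the entire remainder of the argument placing the four copies of the ordered triple $(u,v,w)$ inside the small clique $K_{x+4}$ so that the $4\cdot e(\{u,v,w\},H)$ cross edges into $K_{v(H)}$ are pairwise edge-disjoint. This is a genuinely tight packing --- for $S=(3,2,2)$ you must fit four copies of $P_3$ into $K_5$ while routing twelve cross edges whose feasibility depends on how $\Phi(H)$ happened to place $u_1,u_2,w_1$ --- and it forces the case analysis (Cases 1, 2.1, 2.2 and the extremal choice $(\ast)$) that constitutes the proof. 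You acknowledge that a ``sharp accounting'' and ``careful case analysis'' are needed, but you defer them entirely, so the threshold $x$ versus $x+1$ is never actually established.

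Moreover, the specific tools you propose cannot substitute for that analysis. Lemma \ref{lem 4} requires that at most one vertex of $A-U$ has neighbours in $B-U$; here both $u$ and $w$ (and, for $S=(3,3,3)$, also $v$) send edges into $H$, so condition (i) fails and the $(A,U,B)$-mechanism applies only in the special subcase the paper isolates at the start of its Case 1 (where $w_1$ is $4$-fixed or $4$-placed). Lemma \ref{lem 2} attaches $k$ \emph{leaves with distinct neighbours} via K\"onig's theorem; it neither grafts branches at prescribed roots nor says anything about isolated vertices, which carry no edges and therefore cannot be reached by any matching argument --- their sole function is to enlarge the clique $K_{v(M)}$ in which the copies of $u,v,w$ must be accommodated, and the exact amount of that enlargement is precisely what the claim quantifies. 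Finally, ``extend $u,v,w$ to a maximal backbone and apply Lemma \ref{lemma 2.11}'' is not available in general: the component containing $u,v,w$ need not reduce to a path, lasso or double lasso, and even when it does, Lemma \ref{lemma 2.11} leaves the lasso endpoints un-$4$-placed, which may be exactly the branch roots you need $4$-placed. The proposal is therefore an outline whose load-bearing steps either fail as stated or are left unexecuted.
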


\begin{proof} Suppose the claim does not hold. Assume that $G$ contains $x+1$ isolated vertices $s_1,\ldots, s_{x+1}$, and let $u,v,w$ be three consecutive vertices of $G$  satisfying the specific degree sequence $S$. Suppose that $N_G(u)=\{u_1,u_2,v\}$,  $N_G(v)=\{v_1,u,w\}$ and $N_G(w)=\{w_1,w_2,v\}$, where $v_1,w_2$ may not exist.  Let $M=G[\{s_1,\ldots,s_{x+1},u,v,w\}]$ and $H=G-M$.  Divide $K_{n}$ into two disjoint subgraphs $K_{v(H)}$ and $K_{v(M)}$.

First we claim that $H$ has a 4-placement in $K_{v(H)}$.  Clearly, $H$ is a $(v(H),\,v(H)-1)$-graph with $g(H)\geq 9$. By the induction hypothesis, it suffices to show that $v(H)\geq 8$, $\Delta(H)\leq v(H)-4$ and $H\notin W$. It follows by $g(G)\geq 9$ that $H$ or $G[V(H)\cup \{u,v,w\}]$ contains an induced cycle with length at least 9. In the former case, since $g(G)\geq 9$, $H$ satisfies the three properties above clearly. Thus we may assume that the latter holds, that is, $H$ is a tree (recall that $H$ is a $(v(H),\,v(H)-1)$-graph) and $G[V(H)\cup \{u,v,w\}]$ contains an induced cycle with length at least 9. So there is an induced path in $H$ of order at least six and then $\Delta(H)\leq v(H)-4$ as $g(G)\geq 9$. Moreover, since $g(G)\geq 9$ and $|N_H(\{u,v,w\})|\geq 3$, we have that $v(H)\geq 8$ and  $H\notin W$ (shown in Fig. \ref{fig1}).

Thus, in order to obtain a 4-placement of $G$ (which contradicts the assumption that $G$ is not 4-placeable), it suffices to put four edge-disjoint copies of $M$ in $K_{v(M)}$ such that $\Phi(E(M,H))$ are edge disjoint. In fact, we only need to consider how to put 4 copies of $u,v$ and $w$ in $K_{v(M)}$ such that $\Phi(E(M,H))$ are edge disjoint: If $\phi_i(\{u,v,w\})$ is known in $K_{v(M)}$, then embed $s_1,\ldots, s_{x+1}$ arbitrarily in $K_{v(M)}-\{\phi_i(u),\phi_i(v),\phi_i(w)\}$. So, in the following, we omit $\phi_i(s_j)$ for $1\leq i\leq 4$, $1\leq j\leq x+1$. For convenience, let $V(K_{v(H)})=V(H)$ and $V(K_{v(M)})=V(M)$. In particular, we write $\phi_i(u,v,w)=(\phi_i(u),\phi_i(v),\phi_i(w))$ (an ordered 3-tuple).

\medskip
\noindent\textbf{Case 1. } $S=(3,2,2)$.
\medskip

Observe that the 4-placement of $w_1$ affects the 4-placement of $w$. In fact, if $w_1$ is 4-fixed (resp. 4-placed), then we can construct a 4-placement of $M$ such that $w$ is 4-placed (resp. 4-fixed). So we divided the proof into the following three cases. If $w_1$ is $4$-fixed, let $A=M$, $B=H$ and $U=\{w_1\}$. If $w_1$ is $4$-placed, let $A=G[V(M)-\{w\}]$, $U=\{w\}$ and $B=H$.  Lemmas \ref{lemma 2.7} (iii) and \ref{lem 4} imply that $G$ is $4$-placeable in both cases. So we may assume that $\phi_1(w_1)=p,\phi_2(w_1)=p,\phi_3(w_1)=q$ and $\phi_4(w_1)\in \{p,q,r\}$, where $p,q,r$ are three distinct vertices of $V(K_{v(H)})$. We define $\phi_i(u,v,w)$ as follows.

$$\left\{\begin{aligned}
  &(v,w,s_1),  \,\,\,i=1 \\
  &(s_1,v,u),  \,\,\, i=2 \\
  &(s_2,s_1,u),  \,\,\, i=3\\
  &(w,x,y), \mbox{ where }(x,y,\phi_4(w_1))\in\{(s_2,v,p),(u,s_2,q),(s_2,u,r)\}, i=4.
\end{aligned}
\right.$$

We can check that whatever $\Phi(\{u_1,u_2\})$ is, we can  get a $4$-placement of $G$ because $u$ is 4-placed and $\phi_i(N_H(u))\cap \phi_i(N_H(w))=\emptyset$ for each $1\leq i \leq 4$.

\medskip
\noindent\textbf{Case 2.} $S=(3,2,3)$ or  $S=(3,3,3)$.
\medskip

Recall that we already put four copies of $H$ in $K_{v(H)}$. For convenience, suppose $\phi_1(N_H(u))=\phi_1(\{u_1,u_2\})=\{u_1,u_2\}$ and $\phi_1(N_H(w))=\phi_1(\{w_1,w_2\})=\{w_1,w_2\}$ in $\Phi(H)$. If $v_1$ exists, suppose $\phi_1(N_H(v))=\phi_1(\{v_1\})=\{v_1\}$. We choose $l,t$ (and also $l^{\prime}, t^{\prime}$) to be some permutation of $u,w$, that is, $\{l,t\}=\{l^{\prime}, t^{\prime}\}=\{u,w\}$. Further, if $l=u$ (resp. $w$), we sometimes use $N_H(l)$ to denote $N_H(u)$ (resp. $N_H(w)$) and use $l_1,l_2$  to denote $u_1,u_2$ (resp. $w_1,w_2$).  One can define $N_H(t),N_H(l^{\prime}),N_H(t^{\prime}),t_i,l^{\prime}_i$ and $t^{\prime}_i$ for $i=1,2$ in this way. Observe that if $\phi_\alpha(N_H(l))\cap \phi_\beta(N_H(l^{\prime}))=\emptyset$ for $1\leq \alpha\neq \beta\leq 4$, then we can put $\phi_\alpha(l)$ and $\phi_\beta(l^{\prime})$ on a same vertex of $K_{v(M)}$.

In fact, such $\alpha,\beta,l,l^{\prime}$ exist. If not, for each $l\in\{u,w\}$ and each $2\leq  \alpha\leq 4$, we have $\phi_\alpha(N_H(l))\cap \phi_1(N_H(u))\neq \emptyset$ and $\phi_\alpha(N_H(l))\cap \phi_1(N_H(w))\neq \emptyset$ ($\beta=1$). Then $\{\phi_\alpha(N_{H}(u)),\phi_\alpha(N_H(w))\}$ is $\{\{u_1,w_1\},\{u_2,w_2\}\}$ or $\{\{u_1,w_2\},\{u_2,w_1\}\}$ for each $\alpha\in\{2,3,4\}$. So there exist $\alpha\neq \beta\in\{2,3,4\}$ such that $\{\phi_\alpha(N_{H}(u)),\phi_\alpha(N_H(w))\}$=$\{\phi_\beta(N_{H}(u)),\phi_\beta(N_H(w))\}$. Then one can choose $l,l^{\prime}\in\{u,w\}$ satisfying $\phi_\alpha(N_H(l))\cap \phi_\beta(N_H(l^{\prime}))=\emptyset$ easily.  Choose $\alpha,\beta,l,l^{\prime}$  such that $$\phi_\alpha(N_H(l))\cap\phi_\beta(N_H(l^{\prime}))=\emptyset \text{ and then, } |\phi_\alpha(N_H(t))\cap\phi_\beta(N_H(l^{\prime}))| \text{ is maximum.} \qquad(\ast)$$
Without loss of generality, assume that $\alpha=1$ and $\beta=2$.

\medskip
\noindent\textbf{Subcase 2.1. $S=(3,2,3)$.}
\medskip

Note that in this case, the vertex $v$ has no neighbor in $H$. First put $\phi_1(l)$ and $\phi_2(l^{\prime})$ on $u$  in $K_{v(M)}$ as $\phi_1(N_H(l))\cap\phi_2(N_H(l^{\prime}))=\emptyset$. If $\phi_1(N_H(t))\cap \phi_2(N_H(t^{\prime}))=\emptyset$, then put $\phi_1(t)$ and $\phi_2(t^{\prime})$ on $w$ (See Fig. \ref{fig10} (a)). Further, one may get a 4-placement of $G$ by choosing $$\phi_1(v)=v,\phi_2(v)=s_1, \phi_3(u,v,w)=(s_2,u,s_3),\text{ and } \phi_4(u,v,w)=(s_1,s_2,v).$$
 This is a contradiction to the assumption that $G$ is not 4-placeable. So in the following, we may assume that $t_1\in \phi_1(N_H(t))\cap \phi_2(N_H(t^{\prime}))$ (recall that $\phi_1(N_H(t))=\{t_1,t_2\}$). Further, let $\phi_1(l,v,t)=(u,v,w) \text{ and } \phi_2(l^{\prime},v,t^{\prime})=(u,s_1,v).$

If $\phi_{i}(N_H(y))\cap \phi_1(N_H(t))=\emptyset$ for some $y\in\{u,w\}$ and some $i\in\{3,4\}$, then let $\phi_{i}(y)=\phi_1(t)=w$, $\phi_i(v)=u$ and $\phi_i(\{u,w\}-\{y\})=s_2$. And, set $\phi_j(u,v,w)=(s_1,s_2,s_3)$, where $\{i,j\}=\{3,4\}$ (see Fig. \ref{fig10} (b)). Therefore, $\phi_i(N_H(y))\cap \{t_1,t_2\}\neq \emptyset$ for every $y\in\{u,w\}$ and every $i\in\{3,4\}$. Assume that $\{\phi_3(N_H(u)),\phi_3(N_H(w))\}=\{\{t_1,a\},\{t_2,b\}\}$ and $\{\phi_4(N_H(u)),\phi_4(N_H(w))\}=\{\{t_1,c\},\{t_2,d\}\}$, where $a,b,c,d\in V(H)-\{t_1,t_2\}$, $a\neq b$ and $c\neq d$.

\begin{figure}[H]
\centering    
    \includegraphics[scale=0.35]{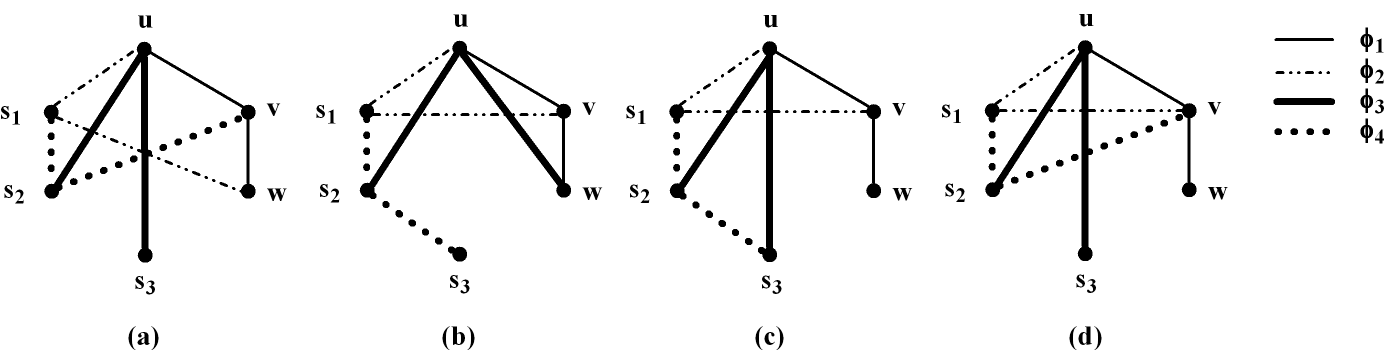}
    \caption{The subcases of the case $S=(3,2,3)$.}\label{fig10}
\end{figure}

If there exist $y,z\in\{u,w\}$ such that $\phi_3(N_{H}(y))\cap \phi_4(N_H(z))=\emptyset$, then let  $\phi_3(y,v,\{u,w\}-\{y\})=(s_3,u,s_2)$ and $\phi_4(z,v,\{u,w\}-\{z\})=(s_3,s_2,s_1)$ (see Fig. \ref{fig10} (c)). It is not difficult to check that a 4-placement of $G$ is obtained in this way, contradicting the assumption. Thus $\phi_3(N_{H}(y))\cap \phi_4(N_H(z))\neq \emptyset$ for each $y,z\in\{u,w\}$. Then, more precisely, we may assume that $$\{\phi_3(N_H(u)),\phi_3(N_H(w))\}=\{\{t_1,a\},\{t_2,b\}\} \text{ and } \{\phi_4(N_H(u)),\phi_4(N_H(w))\}=\{\{t_1,b\},\{t_2,a\}\}.$$ Recall that $t_1\in \phi_2(N_H(t^{\prime}))$ and $|N_H(t^{\prime})|=2$. Then $\{t_2,a\}$ or $\{t_2,b\}$ does not intersect with $\phi_2(N_H(t^{\prime}))$. (Observe that $t_2\notin \phi_2(N_H(t^{\prime}))$ as otherwise $\phi_2(N_H(t^{\prime}))=\{t_1,t_2\}$. In this case since $\phi_2(N_H(l^{\prime}))\cap\phi_2(N_H(t^{\prime}))=\emptyset$ and $\phi_2(N_H(t^{\prime}))=\phi_1(N_H(t))=\{t_1,t_2\}$, we obtain that $|\phi_2(N_H(l^{\prime}))\cap \phi_1(N_H(t))|=0$. Then we obtain a contradiction with the choice ($\ast$) as $\phi_2(N_H(t^{\prime}))\cap \phi_1(N_H(l))=\emptyset$ and  $|\phi_2(N_H(t^{\prime}))\cap \phi_1(N_H(t))|=2>|\phi_2(N_H(l^{\prime}))\cap \phi_1(N_H(t))|$). Suppose $\phi_i(N_H(y))=\{t_2,b\}$ with $\{t_2,b\}\cap \phi_2(N_H(t^{\prime}))=\emptyset$ for some $i\in\{3,4\}$ and some $y\in\{u,w\}$. Let  $\phi_i(y,v,\{u,w\}-\{y\})=(v,s_2,s_1)$ and $\phi_j(u,v,w)=(s_2,u,s_3)$, where $\{i,j\}=\{3,4\}$ (see Fig. \ref{fig10} (d)). We get a 4-placement of $G$, a contradiction again.

\medskip
\noindent\textbf{Subcase 2.2. $S=(3,3,3)$.}
\medskip

In this case, $|N_H(u)|=|N_H(w)|=2$ and $|N_H(v)|=1$. Roughly speaking, if we put $\phi_1(M), \phi_2(M)$ and $\phi_3(M)$ on $K_{v(M)}$ properly, then we may put $\phi_4(M)$ easily. More precisely,  we claim that after putting $\phi_1(M),\phi_2(M)$ and $\phi_3(M)$ on $K_{v(M)}$, if $a,b$ and $c$ are three independent vertices in $K_{v(M)}$ with $e(a,H)\leq 2$, $e(b,H)\leq 1$ and $e(c,H)=0$, then one can put $\phi_4(\{u,v,w\})$ easily on $\{a,b,c\}$. Set $\{r,y,z\}=\{u,v,w\}$, since $e(a,H)\leq 2$, $|N_H(u)|=|N_H(w)|=2$ and $|N_H(v)|=1$, there is a vertex in $\{r,y,z\}$, say $r$, such that $\phi_{4}(N_H(r))\cap N_H(a)=\emptyset$ and then let $\phi_4(r)=a$. Also, since $e(b,H)\leq 1$, there is a vertex in $\{y,z\}$, say $y$, such that  $\phi_4(N_H(y))$ does not intersect with $N_H(b)$. So we may let $\phi_4(y)=b$  and $\phi_4(z)=c$.

\begin{figure}[H]
\centering    
    \includegraphics[scale=0.35]{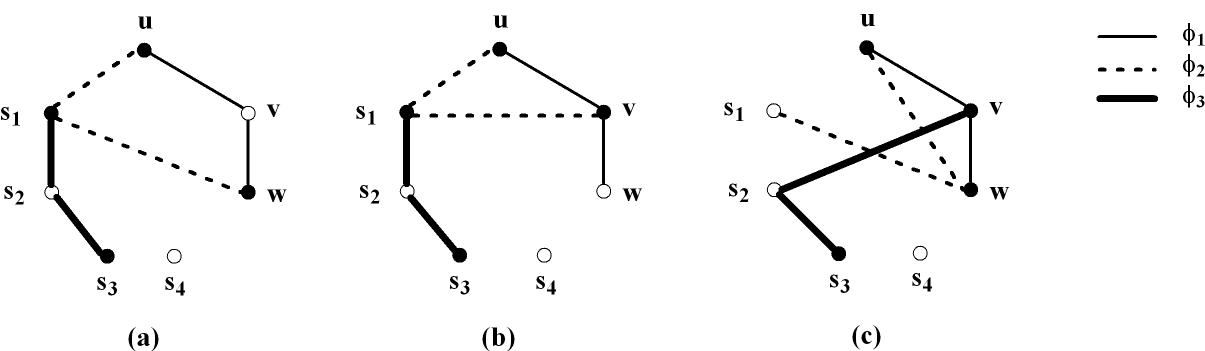}
    \caption{The case of $S=(3,3,3)$.}\label{fig11}
\end{figure}

First put $\phi_1(l)$ and $\phi_2(l^{\prime})$ on $u$ in $K_{v(M)}$ as $\phi_1(N_H(l))\cap\phi_2(N_H(l^{\prime}))=\emptyset$. If $\phi_1(N_H(t))\cap \phi_2(N_H(t^{\prime}))=\emptyset$, then let $\phi_1(l,v,t)=(u,v,w)$, $\phi_2(l^{\prime},v,t^{\prime})=(u,s_1,w)$ and $\phi_3(u,v,w)=(s_1,s_2,s_3)$. Now we claim that we may put $\phi_3(u)$ on $s_1$. Since $|N_H(v)|=1$, we get that $\phi_3(N_H(u))\cap \phi_2(N_H(v))=\emptyset$ or $\phi_3(N_H(w))\cap \phi_2(N_H(v))=\emptyset$. Here, we assume that the former holds. If the latter holds, then swap $\phi_3(u)$ and $\phi_3(w)$, that is, let $\phi_3(u,v,w)=(s_3,s_2,s_1)$. After putting $\phi_1(M)$, $\phi_2(M)$ and $\phi_3(M)$, we have  $e(v,H)=e(s_2,H)=1$ and $e(s_4,H)=0$, so we can put $\phi_4(\{u,v,w\})$ on $\{v,s_2,s_4\}$ easily (see Fig. \ref{fig11} (a)).

Thus we may assume  $t_1\in \phi_1(N_H(t))\cap \phi_2(N_H(t^{\prime}))$. If $v_1\notin\phi_2(N_H(t^{\prime}))$, then adjust $\phi_{2}(t^{\prime})$ to $\phi_1(v)$ (i.e. to $v$) on the basis of Fig. \ref{fig11} (a) and,  put $\phi_4(\{u,v,w\})$ on $\{s_2,s_4,w\}$ (see Fig. \ref{fig11} (b)). So it suffices to consider the case that $\phi_2(N_H(t^{\prime}))=\{t_1,v_1\}$. In this case, we have $\phi_2(N_H(t^{\prime}))\cap \phi_1(N_H(l))=\{t_1,v_1\}\cap \{l_1,l_2\}=\emptyset$ and $|\phi_2(N_H(t^{\prime}))\cap \phi_1(N_H(t))|=|\{t_1\}|=1$. It follows from the choice ($\ast$) that $|\phi_2(N_H(l^{\prime}))\cap \phi_1(N_H(t))|\geq 1$. Note that $\phi_2(N_H(l^{\prime})),\phi_2(N_H(v))$ and $\phi_2(N_H(t^{\prime}))$ are pairwise disjoint. So $t_2\in \phi_2(N_H(l^{\prime}))$ and  $\phi_2(N_H(v))\cap \phi_1(N_H(t))=\emptyset$. Let $\phi_2(l^{\prime},v,t^{\prime})=(u,w,s_1)$, $\phi_3(u,v,w)=(v,s_2,s_3)$ (here, we assume $\phi_3(N_H(u))\cap \phi_1(N_H(v))=\emptyset$, otherwise, similarly to the paragraph above, we swap $\phi_3(u)$ and $\phi_3(w)$) and put $\phi_4(\{u,v,w\})$ on $\{s_1,s_2,s_4\}$ (see Fig. \ref{fig11} (c)). A 4-placement of $G$ is obtained, contradicting the assumption.\qed\end{proof}

Let $T^1,\ldots, T^a$ denote the components of $G$ that are trees and let $C^{a+1},\ldots, C^b$ denote the components of $G$ that are not trees. Furthermore, say $v(T^1)\geq \cdots \geq v(T^a)$. Note that at least one component of $G$ must be a tree and recall that we may assume that $G$ has at least one cycle, so indeed $b > a \geq 1$.

\begin{lem}\label{lemma 3.3} If \,$G$ has at least two components each of which is a tree, then $G$ is $4$-placeable.
\end{lem}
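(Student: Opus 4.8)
The plan is to reduce first to a highly structured situation via Lemma~\ref{lemma 3.2}, and then to place $G$ by splitting it into an $(A,U,B)$-structure and applying Lemma~\ref{lem 4}, using the explicit placements of cycles, double lassos and spiders established earlier. First I would check whether $G$ has four leaves with pairwise distinct neighbours (equivalently, by the second part of Lemma~\ref{lemma 3.2}, four nodes); if so, Lemma~\ref{lemma 3.2} already finishes. So from now on I assume $G$ has at most three nodes. Under this hypothesis the tree components are forced to be simple: two tree components each with at least two nodes would produce four nodes, so at most one tree component can be more complex than a star, a $K_2$, or an isolated vertex. Moreover, counting vertices against edges shows that the cycle components carry total excess exactly $a-1\ge 1$ (a unicyclic component has as many edges as vertices, a bicyclic one has one more), so at least one cycle component is bicyclic and, by the remark in Section~\ref{section 2}, contains a double lasso.

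Next I would build $B$ from the parts that can be placed by themselves with the right vertices $4$-placed. A pure cycle is handled by Lemma~\ref{lemma 2.11}(i), a double lasso of minimum degree $2$ by Corollary~\ref{theorem 13}, and a cycle component carrying pendant paths by first $4$-placing its unicyclic or bicyclic core through Lemma~\ref{lemma 2.11}(ii)--(iii) with the attachment vertices $4$-placed and then restoring the pendant leaves through Observation~\ref{observation 1}. The small tree components, together with all isolated vertices, go into $A$: here I would use the dispersed $4$-placements of unions of paths plus a $K_1$ and of $2K_1\uplus Q(n_1,n_2,n_3)$ from Lemma~\ref{lemma 2.7}(iii)--(iv), again arranging that the nodes are $4$-placed, and I would reattach the leaves of any star through Observation~\ref{observation 1} once its centre is $4$-placed. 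With the interface vertices $4$-placed on both sides, Lemma~\ref{lem 4}, taking $U$ to be the leftover leaves (whose neighbours are $4$-placed), assembles a $4$-placement of $G$.

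The hard part will be the book-keeping that makes all interface vertices simultaneously $4$-placed, together with the configurations where $A$ cannot be placed on its own. The worst offender is a large star $S_m$ whose centre may have degree up to $n-4$: its $m-1$ leaves must be spread over the whole of $K_n$, so the star cannot be separated from the cycle part; instead one must $4$-place $G$ minus the star's leaves (which is again an $(n',n'-1)$-graph, eligible for the induction hypothesis or for the explicit placements above) with the centre $4$-placed, and then apply Observation~\ref{observation 1}. A second delicate point arises when there is exactly one cycle component, necessarily bicyclic: deleting it leaves a pure forest that must itself be $4$-placed, and here one must lean on the presence of at least two tree components---and on the bound on the number of isolated vertices supplied by Claim~\ref{clm 3.112}---to guarantee the extra low-degree vertices that Lemma~\ref{lemma 2.7} requires. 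Verifying that every configuration surviving the reduction to at most three nodes falls into one of these patterns, and that in each the chosen placements can be made to agree on the $4$-placed vertices, is the bulk of the work.
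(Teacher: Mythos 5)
Your opening reduction (at most three nodes via Lemma \ref{lemma 3.2}) and your observation that the non-tree components carry total excess $a-1\geq 1$ and hence contain a double lasso both match the paper. But your core mechanism is different and, as proposed, has a real gap: you plan to $4$-place the union $B$ of the cycle components directly, via Lemma \ref{lemma 2.11}, Corollary \ref{theorem 13} and Observation \ref{observation 1}. That union is a $(v(B),\,v(B)+a-1)$-graph whose components may carry pendant trees of depth greater than one (so one application of Observation \ref{observation 1} does not restore them), and Lemma \ref{lemma 2.11}(ii)--(iii) leaves exactly the cycle vertices $v_1$ (and $v_l$) un-$4$-placed, which may be precisely the attachment vertices you need $4$-placed. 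The paper avoids all of this with Lemma \ref{claim 3.111}, which you never invoke: after splitting off $A=G[V(T^1\uplus T^2)\cup\{u\}]$ for a vertex $u$ of maximum degree in $G-(T^1\uplus T^2)$ (note $\Delta(G-(T^1\uplus T^2))\geq 3$, since that graph has more edges than vertices), the remainder $B$ is completed to an $(l,\,l-1)$-graph by adding edges between its components and is then handled by the induction hypothesis on Theorem \ref{theorem 2}; gluing is by Lemma \ref{lem 4} with $U=\emptyset$. Without some substitute for this step your plan amounts to re-proving most of Section \ref{section 4} inside this one lemma.

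The second gap is the case in which every tree component is an isolated vertex, on which your split gives no purchase: $A$ is edgeless, so $B$ still carries all the excess. This is where the paper does the real work, and your sketch does not capture it. For $a\geq 3$ one first forces $\Delta(G)=3$ (again via Lemma \ref{claim 3.111}), then finds three consecutive vertices of the double lasso with degree sequence $(3,2,2)$, $(3,2,3)$ or $(3,3,3)$; Claim \ref{clm 3.112}, used contrapositively (many isolated vertices plus such a triple forces $4$-placeability), leaves only the case where every vertex of the double lasso has degree $3$, which is killed by the count $x_3=4+x_1\leq 10<v(L)$. For $a=2$ one similarly reduces to maximum degree $3$ on the double lasso and exhibits a $(3,2,2)$ triple, contradicting Claim \ref{clm 3.112}. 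Your stated use of Claim \ref{clm 3.112} --- ``to guarantee the extra low-degree vertices that Lemma \ref{lemma 2.7} requires'' --- misreads its role: it is the engine that directly produces the $4$-placement (or the contradiction), not a source of vertices for Lemma \ref{lemma 2.7}. These two omissions are the bulk of the proof, as you yourself anticipate.
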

\begin{proof} Since $G$ is an $(n,\,n-1)$-graph,  we have $\Delta(G-(T^1\uplus T^2))\geq 3$, say $u\in V(G-(T^1\uplus T^2))$ is a vertex with maximum degree. If $v(T^1\uplus T^2)\geq 3$, let $A=G[V(T^1\uplus T^2)\cup \{u\}]$, $B=G-A$. By Lemma \ref{lemma 3.2}, $T^1\uplus T^2$ contains at most three leaves with distinct neighbors, otherwise we are done. Then deleting some leaves of $T^1\uplus T^2$ if necessary, we obtain $K_1\uplus P_{l_1}\uplus P_{l_2}$ with $l_1+l_2\geq 3$ or $2K_1\uplus Q(n_1,n_2,n_3)$ with $n_1\geq 2$ from $A$. Lemma \ref{lemma 2.7} (iii), (iv) and Observation \ref{observation 1} imply that $A$ has a 4-placement such that $u$ is 4-placed. Moreover, $B$ has a 4-placement by Lemma \ref{claim 3.111}. Thus $G$ is 4-placeable by Lemma \ref{lem 4} ($U=\emptyset$) and the lemma holds. Thus $T^1\cong\cdots\cong T^a\cong K_1$. That is, each tree $T^i$ in $G$ is in fact an isolated vertex.

First we consider the case that the number of isolated vertices in $G$ is at least three, i.e., $a\geq 3$.  In this case, we claim that $\Delta(G)=3$. If not, assume $d_G(u)\geq 4$ and let $A=G[V(T^1\uplus T^2\uplus T^3)\cup \{u\}]$ and  $B=G-A$. Clearly, $A$ has a 4-placement such that $u$ is 4-placed. Moreover, Lemma \ref{claim 3.111} implies that $B$ has a 4-placement, then $G$ is 4-placeable by Lemma \ref{lem 4} ($U=\emptyset$). We are done.

Further, since $G$ is an $(n,\,n-1)$-graph, there is a component $C^i$ of $G$ with at least $v(C^i)+1$ edges. Then $C^i$ contains a double lasso $L$ as a subgraph.  It follows by $\Delta(G)=3$ that $\Delta(C^i)=3$. Suppose $v\in V(L)$ with $d_G(v)=3$. Since each vertex on the double lasso $L$ has degree at least 2 in $G$, one may find three consecutive vertices on $L$ with degree sequence (in $G$) either $(3,2,2)$, or $(3,2,3)$, or  $(3,3,3)$. By Claim \ref{clm 3.112}, we have that $a=3$ and every three consecutive vertices of $L$ have degree sequence  ($3,3,3$), otherwise we are done. In other words, each vertex on $L$ has degree three in $G$.

Let $x_i$ be the number of vertices in $G$ with degree $i$ for $1\leq i\leq 3$. Since $G$ is an $(n,\,n-1)$-graph and $\Delta(G)=3$ and $a=3$, we obtain that $3+x_1+x_2+x_3=n$ and $x_1+2x_2+3x_3=2n-2$. Thus $x_3=4+x_1$. Lemma \ref{lemma 3.2} implies that the number of nodes of $G$ is at most three, then $x_1\leq 6$ because $\Delta(G)=3$ and $G$ does not  contain non-trivial tree. That is, $x_3\leq 10$. However, it is easy to check that $v(L)>10$ as $g(G)\geq 9$, a contradiction.

Now we consider the case $a=2$, that is, $G$ contains exactly two trees (isolated vertices) as components. In this case, the structure of $G$ can be easily described: $G\cong 2K_1\uplus C^3\uplus\cdots\uplus C^b$, where $C^3$ is a $(v(C^3),v(C^3)+1)$-graph and $C^i$ is a $(v(C^i),v(C^i))$-graph for each $4\leq i\leq b$.  Clearly, $C^3$ contains a double lasso $L$ as a subgraph. Moreover, if $C^3\cong L$, then by Lemma \ref{lemma 2.11} (iii), $C^3$ is 4-placeable. By the induction hypothesis, $G-C^3$ is also 4-placeable (adding an edge between two isolated vertices one may get an $(l,l-1)$-graph for some $l$). Then $G$ is 4-placeable, the lemma holds.

\begin{figure}[H]
\centering    
    \includegraphics[scale=0.3]{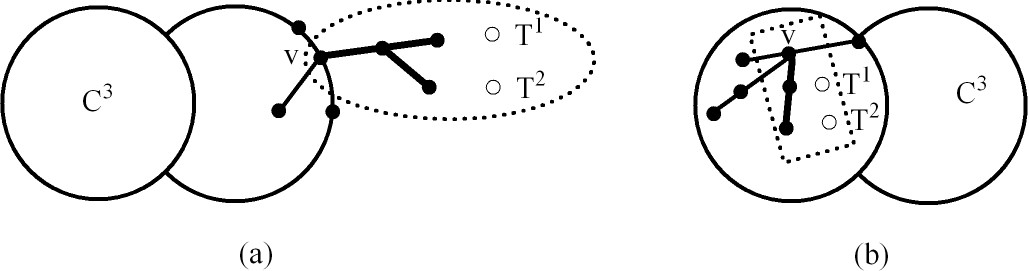}
    \caption{The case of $a=2$ in Lemma \ref{lemma 3.3}, where the white vertices are $T^1$ and $T^2$. The bold lines indicate the subgraph $T$.}\label{fig12}
\end{figure}

Therefore, in $C^3$, there are some trees each of which intersects with the double lasso $L$ at a leaf. Moreover, any two trees can only intersect at such leaves, and some of these trees may intersect $L$  at a same vertex, see Fig. \ref{fig12} (a).  Now we claim that $\Delta(C^3)=3$. Otherwise, choose $v\in V(C^3-L)$ with $d_{G}(v)\geq 4$ unless all such vertices are in $L$, e.g. see Fig. \ref{fig12} (b). Let $A=T^1\uplus T^2\uplus T$ and $B=G-A$, where $T$ is a subgraph of $C^3$ containing $v$ and  $d_{T}(v)=1$. It follows by Lemma \ref{lemma 3.2} and the choice of $v$ that $T$ can be obtained by adding some leaves to a path or $Q(n_1,n_2,n_3)$. By Lemma \ref{lemma 2.7} (iii)-(iv), Observation \ref{observation 1} and Claim \ref{claim 3.111}, we see that $A$ and $B$  has a 4-placement, respectively.  Then $G$ is 4-placeable by Lemma \ref{lem 4}($U=\emptyset$), we are done. Thus $2\leq d_G(w)\leq 3$ for each vertex $w\in V(L)$. Observe that there are at most three disjoint trees each of which intersects with $L$ at a leaf because each such tree contributes a node to $G$. Since $g(G)\geq 9$, it is not difficult to check that there are three consecutive vertices on $L$  with degree sequence $(3,2,2)$, which contradicts Claim \ref{clm 3.112}.\qed\end{proof}

We are in the position to prove  Theorem \ref{theorem 2}. First, we describe the structure of $G$. Recall that $G=T^1\uplus \cdots \uplus T^a\uplus C^{a+1}\uplus \cdots \uplus C^{b}$. Lemma \ref{lemma 3.3} implies that  $a=1$. Moreover, since $G$ is a disconnected $(n,n-1)$-graph, each component $C^i$ ($2\leq i\leq b$) is a cycle or a $(v(C^i),v(C^i))$-graph containing one cycle and some trees such that each tree intersects with the cycle at a leaf. Moreover, any two trees can only intersect at such leaves, and some of these trees may intersect the cycle at a same vertex (for convenience, when we say that some trees intersect with a cycle in the following, we assume that these trees satisfy this requirement).

Let $G^{\prime}$ be a subgraph obtained from $G$ by deleting some leaves. Inspired by Observation \ref{observation 1}, in order to get a $4$-placement of $G$, it suffices to construct a 4-placement of $G^{\prime}$ such that all neighbors of leaves of $G$ are 4-placed. We call such 4-placement of $G^{\prime}$ \emph{good}. More precisely, we delete some leaves from component $C^i$ (or $T^i$) of $G$ with the following priority.
\medskip

(i) If deleting some leaves from $C^i$ of $G$ we get a lasso $L(l,s)$ with $l>s$, then delete such leaves from $C^i$ so that $l$ is maximum;  If $T^i\cong K_2$, delete one leaf of $T^i$ to get $K_1$;
\medskip

(ii)  Let $U$ be the set of leaves of $C^i$ (or $T^i$).  For each $u\in V-U$, let $b_u=|\{v\in N_G(u): d_G(v)\geq 2\}|$.  That is, $b_u=|N_G(u)-U|$. If $b_u=0$  or $b_u\geq 2$, then we delete all leaves adjacent with $u$. Otherwise, we delete all leaves adjacent with $u$ except one. See Fig. \ref{fig13} for an example illustrating the deletion.

\begin{figure}[H]
\centering    
    \includegraphics[scale=0.35]{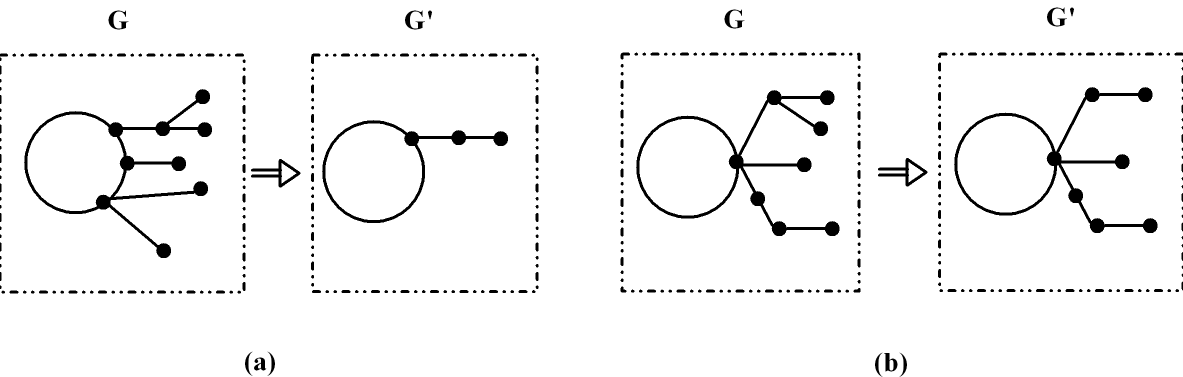}
    \caption{Examples of how $G^{\prime}$ can be obtained from $G$.}\label{fig13}
\end{figure}

It is not difficult to check that if a vertex is a node of $G^{\prime}$, then it is also a node of $G$ (this is the reason why we don't delete all the leaves). Clearly,  $G^{\prime}$ is a $(v(G^{\prime}),\,v(G^{\prime})-1)$-graph. In addition,  it has at most 3 nodes as otherwise $G$ has four nodes and then $G$ is 4-placeable by Lemma \ref{lemma 3.2}, we are done.  For convenience, we write $G^{\prime}=T^{\prime1}\uplus C^{\prime2}\uplus \cdots \uplus C^{\prime b}$. In fact, $T^{\prime1}$ and $C^{\prime i}$ are known as $T^{\prime1}\cong P_t$ ($t=1$ or $t\geq 4$) or $T^{\prime1}\cong Q(n_1,n_2,n_3)$ ($n_3\geq n_2\geq n_1\geq 2$), where $t=1$ if the component $T^{\prime1}$ of $G$ is a star (i.e. $b_u=0$, where $u$ is the center of the star) and $t\geq 4$ if it is a non-star;  Each $C^{\prime i}$ ($(v(C^{\prime i}),\,v(C^{\prime i}))$-graph) has one cycle and some trees such that each tree intersects with the cycle at a leaf.

Observe that if the component $C^{\prime i}$ of $G^{\prime}$ is a cycle, then $C^i$ must also be a cycle in $G$. It is worthwhile mentioning that if all nodes of $C^i$ are on the unique cycle of $C^i$, we will obtain a lasso $L(s+1,s)$ in $G^{\prime}$ rather  than a cycle $C_s$.

\medskip
\noindent\textbf{Case 1. } $T^{\prime1}\not \cong K_1$.
\medskip

First we consider the case that $T^{\prime1}\cong P_t$ with $t\geq 4$.  Note that $P_t$ ($t\geq 4$) has two nodes, then $C^{\prime2}\uplus \cdots\uplus C^{\prime b}$ has at most one node. That is, there is at most one lasso $C^{\prime2}$  and other components are all  cycles. Add an edge between $P_t$ and $C^{\prime2}$ to get a lasso $L$. Lemma \ref{lemma 2.11} (i) and (ii) imply that $G^{\prime}$ has a 4-placement such that all vertices are 4-placed except the vertex $v_1$ of the lasso $L$. Since $G$ has at most three nodes and two nodes of $P_t$ are also the nodes of $G$, we have that $v_1$ or $v_{s-1}$ of the lasso is not a node of $G$. By symmetry, we may assume that $v_1$ is not a node. Therefore, such a $4$-placement of $G^{\prime}$ is good. We are done.

Thus $T^{\prime1}\cong Q(n_1,n_2,n_2)$ with $n_3\geq n_2\geq n_1\geq 2$. Note $Q(n_1,n_2,n_2)$ has three nodes, so $C^{\prime2},\ldots,C^{\prime b}$ are all cycles. Suppose that $C^{\prime2}\cong C_s$ with $s\geq 9$. If $n_3=2$, then $G^{\prime}$ has a good 4-placement by Lemmas \ref{lemma 2.7} (ii) and \ref{lemma 2.11} (i). Thus $n_3\geq 3$. Label $Q(n_1,n_2,n_3)$ as defined and label $C^2$ clockwise with $u_1,u_2,\ldots, u_{s}$. Deleting $u_2$ from $C^{\prime2}$ and adding edges $v_{n_2}^2u_3$, $v_{n_3}^3u_1$, we obtain the lasso $L(n_1+n_2+n_3+s,n_2+n_3+s)$.

Lemma \ref{lemma 2.11} (ii) implies that the lasso has a 4-placement such that all vertices except for $v_1^3$ are 4-placed. Furthermore, for $1\leq i,j\leq 4$, $\phi_i(u_1)$, $\phi_j(u_3)$ are pairwise distinct by the construction of the 4-placement of a lasso (see the proof of Lemma \ref{lemma 2.11} (ii)) and (\ref{eq 2}). Thus deleting  edges $\phi_i(v_{n_2}^2u_3)$, $\phi_i(v_{n_3}^3u_1)$ and adding a vertex $u_2$, edges $u_2\phi_i(u_1)$, $u_2\phi_i(u_3)$ for each $1\leq i\leq 4$, we obtain a 4-placement of $C_s\uplus Q(n_1,n_2,n_3)$ such that all nodes of $Q(n_1,n_2,n_3)$ (such nodes of $G^{\prime}$ are also the nodes of $G$) are 4-placed. Then  $G^{\prime}$ has a good 4-placement by Lemma \ref{lemma 2.11} (i).

\medskip
\noindent\textbf{Case 2. } $T^{\prime1}\cong K_1$.
\medskip

Recall that each $C^{i}$ of $G$ has one cycle and some trees such that each tree intersects with the cycle at a leaf. Note that if every component $C^i$ of $G$ has at most one tree intersecting with the unique cycle of $C_i$, and each of them in $G^{\prime}$ is a lasso or a cycle, then Lemma \ref{lemma 2.11} (i) and (ii) imply that each of $H$ and $K_1\uplus H$ (is a subgraph of a lasso) has a  4-placement, where $H$ is a lasso or cycle. Here since all vertices of the path of a lasso are 4-placed, such a  4-placement of $G^{\prime}$ is good, we are done.

Thus if $G$ has three components $C^2, C^3$ and $C^4$ such that each $C^i$ ($2\leq i\leq 4$) has a tree intersecting with the unique cycle of $C_i$ (in this case, each  $C^{\prime i}$ with $2\leq i\leq 4$ in $G^{\prime}$ is a lasso because $G$ has at most three nodes), we are done.  So we only need to consider the following two cases:
\medskip

(a) $C^2$ of $G$ has $y$ ($1\leq y\leq 3$) trees intersecting with the unique cycle of it;
\medskip

(b) $C^2$ and $C^3$ of $G$ has $x$ and $y$ trees ($y\geq x$) intersecting with the unique cycle of them, respectively.
\medskip

In particular, $C^{\prime2}$ (resp. each of $C^{\prime2}$ and $C^{\prime3}$) of $G^{\prime}$ is not a lasso when $y=1$ (resp. $x=y=1$) by the argument above. Notice that in both cases if $G$ contains a cycle as a component, then $G$ is 4-placeable by Lemma \ref{lemma 2.11} (i) and the induction hypothesis. Thus $G$ (consequently, $G^{\prime}$) contains no cycle as a component. In the following, we give an ($A,U,B$)-structure of $G^{\prime}$ such that $G^{\prime}$ has a good 4-placement or an ($A,U,B$)-structure of $G$ directly, contradicting the assumption that $G$ is not 4-placeable.

In the case (b), we give an ($A,U,B$)-structure of $G^{\prime}$ as exhibited in Fig. \ref{fig3} (a), where each $A$ and $B$ consists of a lasso and a path of order at least one and $U=\emptyset$. Lemmas \ref{lemma 2.11} (ii) and \ref{lem 4} imply that $G^{\prime}$ has a good 4-placement as all vertices on the path of a lasso are 4-placed.  (Here, the components $C^{\prime2}, C^{\prime3}$ and the graph shown in Fig. \ref{fig3} (a) have roughly the `same' structure. More precisely, two trees may intersect with $C^{\prime3}$ at a same vertex, or there is only one tree ($Q(n_1,n_2,n_3)$)  intersecting with the unique cycle of $C^{\prime3}$.) Thus we may assume the case (a) holds.  Notice that $C^{\prime2}$ has a cycle, say $C_s$, and deleting $C_s$ from the $C^{\prime2}$, we get a forest $F$.

Let $M=\{u_1,u_2,\ldots,u_{v(M)}\}$ be a vertex set with $u_i\in V(C_s)$ such that $N_F(u_i)\neq \emptyset$. Clearly, $1\leq v(M)\leq 3$ as $G^{\prime}$ has at most 3 nodes. Thus we only need to prove the following two subcases.

\medskip
\noindent\textbf{Subcase 1. } $1\leq|M|\leq 2$.
\medskip

First, we consider $|M|=1$. In graph $G$, if $v(F)\geq 6$, then by the induction hypothesis, $G[V(F)\cup \{u_1\}\cup V(T^1)]$ (recall that $T^1\cong K_1$) has a 4-placement. Moreover, $P_{s-1}$ ($C_s-u_1$) has a 4-placement such that $\phi_i(p)$, $\phi_j(q)$ $(1\leq i\neq j\leq 4)$ are pairwise distinct by Lemma \ref{lemma 2.1} and (\ref{eq 2}), where $p,q$ are end-vertices of $P_{s-1}$. Adding edges $\phi_i(p)\phi_i(u_1)$ and $\phi_i(q)\phi_i(u_1)$ for each $1\leq i\leq 4$, we obtain a 4-placement of $G$, we are done. So assume $v(F)\leq 5$. Moreover, if $C^{\prime2}$ in $G^{\prime}$ is a lasso, then by the argument in the paragraph below Case 2, we are done.  So by the way of deleting leaves we may assume that $G^{\prime}$ is isomorphic to the graph in Fig. \ref{fig3} (b) or the structure of $G^{\prime}$ is like  the graph in Fig. \ref{fig3} (c) (i.e. $B$ may be $K_1\uplus P_2\uplus P_3$ or $A$ may be a cycle rather than a lasso).

We construct an $(A,U,B)$-structure of $G^{\prime}$ as follows in these two cases: let $A=C_s$ (or $L(s+1,s)$), $U=\emptyset$ and $B$ consists of at least two paths and $K_1$ (see Fig. \ref{fig3} (b)-(c)). Lemmas \ref{lemma 2.11} (i), (ii) and \ref{lemma 2.7} (iii) imply that each of $A$ and $B$ has a 4-placement such that $u_1$ in Fig. \ref{fig3} (b) or the vertices on the path of the lasso ($A$) in Fig. \ref{fig3} (c) are 4-placed. Then $G^{\prime}$ has a good 4-placement by Lemma \ref{lem 4}. Here, since $G$ has at most three nodes, $v_1$ or $v_{s-1}$ of the lasso is not a node of $G$. By symmetry, we may assume that $v_1$ is not a node.
\begin{figure}[H]
\centering    
\subfigure[] 
{
    \begin{minipage}{2.8cm}
    \centering        
    \includegraphics[scale=0.3]{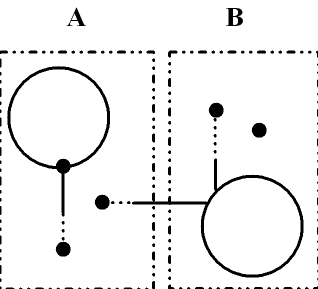}   
    \end{minipage}
}
\subfigure[]
{
    \begin{minipage}{2.8cm}
    \centering      
    \includegraphics[scale=0.3]{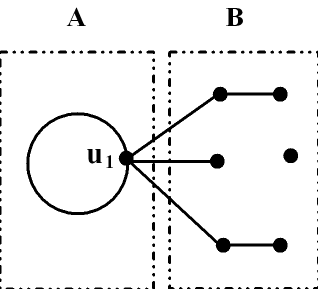}   
    \end{minipage}
}
\subfigure[]
{
    \begin{minipage}{2.8cm}
    \centering      
    \includegraphics[scale=0.3]{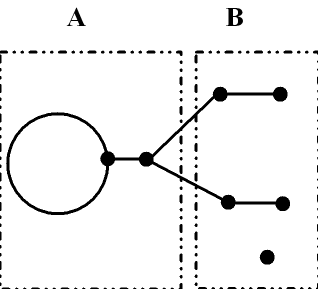}   
    \end{minipage}
}
\subfigure[]
{
    \begin{minipage}{2.8cm}
    \centering      
    \includegraphics[scale=0.3]{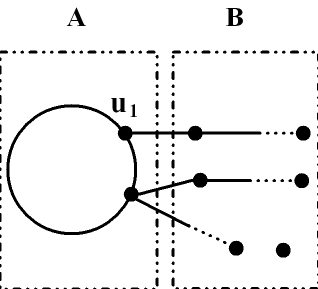}   
    \end{minipage}
}
\subfigure[]
{
    \begin{minipage}{2.8cm}
    \centering          
    \includegraphics[scale=0.3]{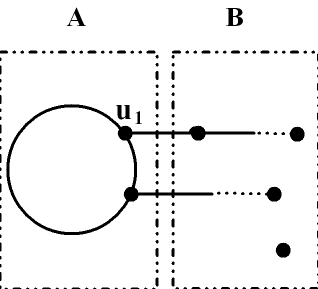}   
    \end{minipage}
}
\caption{($A,U,B$)-structures of $G^{\prime}$}
\label{fig3}  
\end{figure}
If $|M|=2$, then the structure of $G^{\prime}$ is like  one of the graphs in Fig. \ref{fig3} (d)-(e). Let $A$ be the cycle $C_s$, $U=\{u_1\}$ and $B=G-A$. Lemmas \ref{lemma 2.11} (i) ($u_1$ is 4-fixed by the construction of the 4-placement of a cycle), \ref{lemma 2.7} (iii) and  \ref{lem 4} imply that $G^{\prime}$ has a good 4-placement. Notice that if $u_1$ in Fig. \ref{fig3} (d) is a node, then we delete the leaves of $u_1$ and by the similar discussion of Fig. \ref{fig3} (c), $G$ has a 4-placement, since all vertices  of $\Phi(C_s)$ except one are 4-placed.

\medskip
\noindent\textbf{Subcase 2. } $|M|=3$, i.e. $M=\{u_1,u_2,u_3\}$.
\medskip

In this case, $F$ consists of three vertex disjoint paths, say $P^i=u_1^iu_2^i\cdots u_{n_i}^i$ $(1\leq i\leq 3)$ with $n_3\geq n_2\geq n_1$, where $u_iu_1^i\in E(G^{\prime})$. By the way of deleting leaves and the fact that $C^{\prime2}$ in $G^{\prime}$ is not a lasso, there are at least two nodes not on the cycle $C_s\in C^{\prime2}$. That is, $n_3\geq n_2\geq 2$. Let $A=L(s+n_1,s)$, $U=\{u_1^2\}$ and $B=K_1\uplus G^{\prime}[V(P^3\uplus P^2)-\{u_1^2\}]$. By Lemma \ref{lemma 2.11} (ii), we may get a 4-placement of $A$ such that $u_2,u_3$ are 4-placed.  Further by Lemmas \ref{lemma 2.7} (iii) and \ref{lem 4}, $G^{\prime}$ has a good 4-placement, we are done. \qed

\section*{Acknowledgments} \label{section 5}

We are very grateful to the referees for their many valuable suggestions and comments, which make the proof much simpler and clearer.

\end{document}